\setlist{itemsep=0.4em}
\definecolor{wineRed}{rgb}{0.7,0,0.3}
\newcommand{\Lb}{\mathcal L}
\newcommand{\T}{\mathbb T}
\newcommand{\N}{\mathbb N}
\newcommand{\W}{\mathcal W}
\newcommand{\bu}{u}
\newcommand{\be}{e}
\newcommand{\bff}{f}
\newcommand{\bg}{g}
\newcommand{\bw}{w}
\newcommand{\dd}{\,\mathrm d}
\newcommand{\norm}{\nu}
\renewcommand{\div}{\mathrm{div}\,}
\newcommand{\ess}{\mathrm{ess}\,}
\newcommand{\supp}{\mathrm{supp}\,}
\newcommand{\diam}{\mathrm{diam}\,}
\newcommand{\eps}{\varepsilon}
\newcommand{\R}{\mathbb R}
\newcommand{\dist}{\mathrm{dist}}
\newcommand{\Rn}{\mathbb R^n}
\newcommand{\RN}{\mathbb R^N}
\newcommand{\Rm}{\mathbb R^m}
\newcommand{\Rmn}{\mathbb R^{m \times n}}
\newcommand{\Rnm}{\mathbb R^{n \times m}}
\DeclareMathOperator*{\tr}{Tr}
\newtheorem{prop}{Proposition}
\newtheorem{thm}[prop]{Theorem}
\newtheorem{lemma}[prop]{Lemma}
\newtheorem{cor}[prop]{Corollary}
\theoremstyle{remark}
\newtheorem{remark}{Remark}
\def\onedot{$\mathsurround0pt\ldotp$}
\def\cddot{% two dots stacked vertically
  \mathbin{\vcenter{\baselineskip.67ex
    \hbox{\onedot}\hbox{\onedot}}%
  }}%
\def\cdddot{% three dots
  \mathbin{\vcenter{\baselineskip.67ex
    \hbox{\onedot}\hbox{\onedot}\hbox{\onedot}%
  }}%
}
\newcommand{\ignore}[1]{{}}
\date{\today}
\begin{document}

	\title[Existence of $W^{1,1}$ solutions]{Existence of $W^{1,1}$ solutions to a class of variational problems with linear growth on convex domains}                                 
	\author{Micha{\l} {\L}asica}                                
	\address{Institute of Mathematics of the Polish Academy of Sciences \newline		
		ul.\,\'Sniadeckich 8, 00-656 Warszawa, Poland, ORCID 0000-0002-8365-0484}                                    
               
	\email{mlasica@impan.pl}  
	\author{Piotr Rybka} 
	\address{Institute of Applied Mathematics and Mechanics, University of Warsaw\newline ul.\,Banacha 2, 02-097 Warszawa, Poland, ORCID 0000-0002-0694-8201}                                    
                                 
    \email{rybka@mimuw.edu.pl}
    \thanks{The research of both authors was partially supported by the National Science Center, Poland, through the grant number 2017/26/M/ST1/00700. }
      	\date{\today} 
	\keywords{Linear growth, minimizer, existence, regularity}                                   
	\subjclass[2010]{35A01, 35B65, 35J60, 35J70, 35J75} 
 \begin{abstract}
	We consider a class of convex integral functionals composed of a term of linear growth in the gradient of the argument, and a fidelity term involving $L^2$ distance from a datum. Such functionals are known to attain their infima in the $BV$ space. Under the assumption that the domain of integration is convex, we prove that if the datum is in $W^{1,1}$, then the functional has a minimizer in $W^{1,1}$. In fact, the minimizer inherits $W^{1,p}$ regularity from the datum for any $p \in [1, +\infty]$. We also obtain a quantitative bound on the singular part of the gradient of the minimizer in the case that the datum is in $BV$. We infer analogous results for the gradient flow of the underlying functional of linear growth. We admit any convex integrand of linear growth.
\end{abstract}
	
 \maketitle

\section{Introduction} 
\noindent We say that a function $\Psi \colon \R^N \to [0, +\infty[$, $N\in \N$ is of \emph{linear growth} (at infinity), if there exist constants $C_1, C_2 >0$ such that
\begin{equation} \label{linear} 
C_1|\xi| \leq \Psi (\xi) \leq C_2 (1+|\xi|) \quad \text{for } \xi \in \R^N. 
\end{equation} 
If we only know that the second inequality in \eqref{linear} is satisfied, we say that $\Psi$ is of \emph{at most linear growth}. 

Let $m \in \mathbb N$ and let $\Omega$ be a bounded domain in $\Rm$. We will write
\[\W = L^2(\Omega) \cap W^{1,1}(\Omega).\]
As an intersection of Banach spaces, $\W$ comes with natural notions of strong and weak convergence. Namely, a sequence, or a generalized sequence, converges (weakly converges) in $\W$ if and only if it converges (weakly converges) in $L^2(\Omega)$ and $W^{1,1}(\Omega)$. Equivalently, a sequence, or a generalized sequence, converges (weakly converges) in $\W$ if and only if it converges (weakly converges) in $W^{1,1}(\Omega)$ and is bounded in $L^2(\Omega)$.
 
Further, let $\Phi \colon \R^m \to [0, +\infty[$ be a convex function of linear growth. Given $\lambda >0$, $\bff \in L^2(\Omega)$, we consider the minimization problem for the functional $E^\lambda_{\bff} \colon \W \to [0, +\infty[$ given by 
\begin{equation}\label{defE} 
E^\lambda_{\bff}(\bw) =  \lambda \int_\Omega \Phi(\nabla \bw) + \frac{1}{2} \int_\Omega |\bw - \bff|^2 .
\end{equation} 
The functional $E^\lambda_{\bff}$ is weakly lower semicontinuous on $\W$. However, this space is not reflexive. Hence, without additional assumptions $E^\lambda_{\bff}$ may fail to attain its infimum. In order to resolve this issue, one may opt to consider instead its lower semicontinuous envelope $\overline E^\lambda_{\bff}$ in $L^2(\Omega)$. This relaxation amounts to extending the effective domain of $E^\lambda_{\bff}$ to $BV(\Omega) \cap L^2(\Omega)$ by the formula 
\[\overline E^\lambda_{\bff}(\bw) = \lambda \int_\Omega \Phi(\nabla^{ac} \bw) + \lambda \int_\Omega \Phi^\infty\left(\tfrac{\nabla^s \bw}{|\nabla^s \bw|}\right) \dd |\nabla^s \bw| + \frac{1}{2} \int_\Omega |\bw - \bff|^2, \] 
where $\nabla \bw = \nabla^{ac} \bw \Lb^m + \nabla^s \bw$, $\nabla^{ac} \bw = \frac{\nabla \bw}{\Lb^m}$ ($\frac{\nabla^s \bw}{|\nabla^s \bw|}$ and $\frac{\nabla \bw}{\Lb^m}$ are Radon-Nikodym derivatives) and 
\[\Phi^\infty \colon \mathbb S^{m - 1} \to [0, + \infty[, \quad \Phi^\infty(\xi) = \lim_{t \to + \infty} \tfrac{\Phi(t \xi)}{t}\]
is the recession function of $\Phi$ \cite{GoffmanSerrin, DemengelTemam}, see also \cite[Theorem 5.47]{AmbrosioFuscoPallara}. The direct method of the calculus of variations produces a minimizer $\bu$ of $\overline E^\lambda_{\bff}$ which by strict convexity is unique.

A~question arises then, to what extent can one control the singularity of measure $\nabla \bu$ in terms of $\nabla \bff$. In particular, what are the conditions implying that the minimizer $\bu$ of $\overline E^\lambda_{\bff}$ belongs to $W^{1,1}(\Omega)$, i.\,e.\;$\bu$ is also a minimizer of $E^\lambda_{\bff}$. Let us mention a few  known results in this direction. In \cite{bcno} and \cite{BonforteFigalli}, it has been established for $m=1$ and $\Phi = |\cdot|$ that $|\nabla \bu|\leq |\nabla \bff|$ in the sense of measures. This was later generalized to the vectorial case (where $\bu, \bff \colon \Omega \to \R^n, n>1$) in \cite{giacomellilasica}. Such an estimate is known to fail if $m>1$. However, analogous estimate was proved for the jump part of measure $|\nabla \bu|$ in \cite{CasellesChambolleNovaga2007, jalalzaijump}. A similar result was obtained for a more general class of integrands $\Phi$ in \cite{Valkonen2015}. Whether an estimate of this kind holds for the Cantor part of measure $|\nabla \bu|$ in $m>1$ remains, to our knowledge, an open question. In \cite{mercier}, it is assumed that $\Omega$ is convex and $\Phi$ is of form $\widetilde \Phi \circ \phi$, where $\phi$ is a norm on $\Rm$ and $\widetilde \Phi$ is of linear growth. Under this condition, it is proved that if $\bff$ admits any modulus of continuity with respect to the dual norm $\phi^*$, then it is inherited by $\bu$. In particular, if $\bff \in W^{1,\infty}(\Omega)$, then $\bu \in W^{1,\infty}(\Omega) \subset W^{1,1}(\Omega)$. On the other hand, in \cite{NakayasuRybka}, the case $m=1$ is considered (with $\Omega = \T$). In this setting it is proved for any convex $\Phi$ with linear growth that if $\bff \in W^{1,1}(\Omega)$, then $\bu \in W^{1,1}(\Omega)$ as well. Here, we generalize this statement to an arbitrary value of $m$.

\begin{thm} \label{w11thm}
	Suppose that $\Omega$ is convex. If $\bff \in W^{1,1}(\Omega)$, then there exists a minimizer $\bu \in W^{1,1}(\Omega)$ of $E^\lambda_{\bff}$. Moreover, for any even, convex function $\widetilde \Psi \colon \R \to [0, +\infty[$ there holds 
	\begin{equation} \label{Psiest}
	\int_\Omega\widetilde \Psi(\Phi(\nabla \bu)) \leq \int_\Omega\widetilde \Psi(\Phi(\nabla \bff)). 
	\end{equation}
\end{thm}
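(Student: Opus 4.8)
The plan is to first reduce \eqref{Psiest} to an a priori estimate for a regularized problem whose minimizer is a genuine smooth solution of a Neumann boundary value problem, then to prove that estimate by a maximum principle in which the convexity of $\Omega$ is the decisive ingredient, and finally to pass to the limit. Concretely, I would replace $\Phi$ by a family $\Phi_\eps$ of smooth, uniformly convex integrands of superlinear (say quadratic) growth with $\Phi \le \Phi_\eps \to \Phi$ locally uniformly — for instance by mollifying $\Phi$ and adding $\tfrac{\eps}{2}|\cdot|^2$ — and simultaneously approximate $\Omega$ from inside by smooth convex domains $\Omega_\eps$. For each $\eps$ the functional \eqref{defE} with $\Phi_\eps$ in place of $\Phi$ on $\Omega_\eps$ is coercive and strictly convex on $W^{1,2}$, so it has a unique minimizer $\bu_\eps$, which by standard elliptic regularity for the nondegenerate problem with smooth data solves $\bu_\eps - \bff = \lambda\,\div(\nabla\Phi_\eps(\nabla\bu_\eps))$ in $\Omega_\eps$, together with the natural Neumann condition $\nabla\Phi_\eps(\nabla\bu_\eps)\cdot n = 0$ on $\partial\Omega_\eps$. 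Retaining $\Phi \le \Phi_\eps$ is what will later turn any bound on $\int \Phi_\eps(\nabla\bu_\eps)$ into a bound on $\int|\nabla\bu_\eps|$, via the linear lower bound $C_1|\xi| \le \Phi(\xi) \le \Phi_\eps(\xi)$.

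The heart of the argument is the pointwise gradient commutation
\[ \Phi_\eps(\nabla\bu_\eps) \,\le\, Q_\eps\big[\Phi_\eps(\nabla\bff)\big] \quad\text{on } \Omega_\eps, \]
where $Q_\eps = \mathcal L_\eps^{-1}$ is the resolvent of the linearized Neumann operator $\mathcal L_\eps = I - \lambda\,\div(\nabla^2\Phi_\eps(\nabla\bu_\eps)\nabla\,\cdot\,)$. To obtain it I would differentiate the Euler--Lagrange equation and, using $\nabla\Phi_\eps$ as a multiplier together with the Fenchel--Young identity $\nabla\Phi_\eps(\nabla\bu_\eps)\cdot\nabla\bff \le \Phi_\eps(\nabla\bff) + \nabla\Phi_\eps(\nabla\bu_\eps)\cdot\nabla\bu_\eps - \Phi_\eps(\nabla\bu_\eps)$, assemble a Bochner-type differential inequality showing that $G := \Phi_\eps(\nabla\bu_\eps)$ is a subsolution, $-\lambda\,\div(\nabla^2\Phi_\eps(\nabla\bu_\eps)\nabla G) + G \le \Phi_\eps(\nabla\bff)$, after which the comparison principle yields $G \le Q_\eps[\Phi_\eps(\nabla\bff)]$. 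The essential and most delicate point is the boundary condition: differentiating the Neumann condition tangentially expresses the conormal derivative of $G$ on $\partial\Omega_\eps$ through the second fundamental form of $\partial\Omega_\eps$ evaluated on the tangential vector $\nabla\bu_\eps$; since $\Omega_\eps$ is convex this form is nonnegative, which forces the conormal derivative of $G$ to be $\le 0$, exactly making $G$ a Neumann subsolution as the comparison principle requires. This is where convexity of the domain enters, and it is the main obstacle; controlling this boundary term \emph{uniformly} as the integrand degenerates and the domain regularity deteriorates when $\eps \to 0$ is the technical core.

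Granting the commutation inequality, \eqref{Psiest} for the regularized problem follows quickly. The operator $Q_\eps$ is Markovian: it is positivity preserving and $Q_\eps 1 = 1$ (since $\mathcal L_\eps 1 = 1$), and it preserves mass, $\int_{\Omega_\eps} Q_\eps[\,\cdot\,] = \int_{\Omega_\eps}\,\cdot\,$, because the divergence term integrates to zero under the Neumann condition. Since $\widetilde\Psi$ is even and convex it is nondecreasing in $|\cdot|$, so from $0 \le G \le Q_\eps[\Phi_\eps(\nabla\bff)]$ we get $\widetilde\Psi(G) \le \widetilde\Psi(Q_\eps[\Phi_\eps(\nabla\bff)])$; Jensen's inequality for the averaging operator $Q_\eps$ gives $\widetilde\Psi(Q_\eps[\,\cdot\,]) \le Q_\eps[\widetilde\Psi(\,\cdot\,)]$ pointwise; and mass preservation then yields $\int_{\Omega_\eps}\widetilde\Psi(\Phi_\eps(\nabla\bu_\eps)) \le \int_{\Omega_\eps}\widetilde\Psi(\Phi_\eps(\nabla\bff))$.

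Finally I would let $\eps \to 0$. Because $\bff \in W^{1,1}(\Omega)$, the function $\Phi(\nabla\bff)$ is integrable, so by the de la Vallée Poussin criterion there is a superlinear even convex $\widetilde\Psi_0$ with $\int_\Omega\widetilde\Psi_0(\Phi(\nabla\bff)) < \infty$; inserting $\widetilde\Psi_0$ into the regularized estimate bounds $\int\widetilde\Psi_0(\Phi_\eps(\nabla\bu_\eps))$ uniformly, which through $C_1|\nabla\bu_\eps| \le \Phi_\eps(\nabla\bu_\eps)$ renders $\{\nabla\bu_\eps\}$ equi-integrable, hence weakly precompact in $L^1$ by Dunford--Pettis. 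Combined with the uniform $L^2$ bound (from testing minimality against the competitor $\bff$) this gives weak $\W$-compactness; the limit is a minimizer of $\overline E^\lambda_{\bff}$, hence equals $\bu$ by uniqueness from strict convexity, and equi-integrability excludes a singular part of $\nabla\bu$, so $\bu \in W^{1,1}(\Omega)$. Passing to the limit in the regularized form of \eqref{Psiest} — using lower semicontinuity of $\bw \mapsto \int_\Omega\widetilde\Psi(\Phi(\nabla\bw))$ on the left and convergence of the data term on the right — then yields \eqref{Psiest} for every even convex $\widetilde\Psi$ and completes the proof.
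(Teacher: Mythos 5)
Your overall scaffolding coincides with the paper's: mollify $\Phi$ and add $\tfrac{\eps}{2}|\cdot|^2$, approximate $\Omega$ by smooth convex domains, use the Euler--Lagrange system and $W^{2,2}$ regularity of the regularized minimizers, exploit nonnegativity of a trace expression of the form $\tr(ACAC)$ in the bulk and the sign of the second fundamental form at the boundary, and conclude via de la Vall\'ee Poussin/Dunford--Pettis. Where you genuinely differ is the mechanism of the key estimate: you propose a \emph{pointwise} comparison $\Phi_\eps(\nabla\bu_\eps)\le Q_\eps[\Phi_\eps(\nabla\bff)]$, with $Q_\eps$ the Markovian Neumann resolvent of the linearized operator, and then deduce all $\widetilde\Psi$-inequalities at once from Jensen's inequality and mass conservation; the paper (Lemma \ref{mainlem}) instead proves the integral inequality directly, for each $\widetilde\Psi$ separately, by testing the differentiated system with $D\Psi_{\eps,k}(\nabla\bu^\eps)$ for truncated, smoothed $\widetilde\Psi_k$. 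Your formal computation is correct --- differentiating the equation, multiplying by $D\Phi_\eps(\nabla\bu_\eps)$, using $\tr(ACAC)\ge 0$ and the gradient inequality for $\Phi_\eps$ yields the subsolution property, and your boundary sign is exactly the paper's second-fundamental-form argument --- and it is an attractive strengthening, since one pointwise bound implies every integral bound. (Making it rigorous at $W^{2,2}$ regularity, with merely measurable coefficients $D^2\Phi_\eps(\nabla\bu_\eps)$ and a conormal condition that only makes sense as an $H^{-\frac12}$--$H^{\frac12}$ pairing, requires real work: note that the natural test function $\phi\, D\Phi_\eps(\nabla\bu_\eps)$ produces the flux $\nabla\tfrac12|D\Phi_\eps(\nabla\bu_\eps)|^2$ rather than $D^2\Phi_\eps(\nabla\bu_\eps)\nabla G$, so the weak form of your claimed inequality is not the one that falls out of integration by parts. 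The paper's route avoids ever needing a comparison principle.)

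The genuine gap is the treatment of the datum. You run the regularized problems with $\bff$ itself, so every right-hand side in your scheme contains $\Phi_\eps(\nabla\bff)\ge\tfrac{\eps}{2}|\nabla\bff|^2$. For $\bff\in W^{1,1}(\Omega)$ with $\nabla\bff\notin L^2(\Omega)$ --- which is the case of interest --- this fails to be in $L^1(\Omega)$ for \emph{every} $\eps>0$; the prefactor $\eps$ does not help when $\int_\Omega|\nabla\bff|^2=+\infty$. Consequently $Q_\eps[\Phi_\eps(\nabla\bff)]$ need not be defined, the ``uniform bound'' obtained by inserting the superlinear $\widetilde\Psi_0$ reads $\le+\infty$ and gives no equi-integrability, and the ``convergence of the data term on the right'' in your final limit passage fails; the same problem reappears in the $\Gamma$-limsup, since restricting a $W^{1,1}$ competitor to $\Omega_\eps$ is not a recovery sequence, for the same reason. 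This is precisely what the paper's Lemma \ref{goodseq} exists to fix: it constructs $W^{1,\infty}(\Omega^\eps)$ data $\bff^\eps$ --- a dilation of $\bff$ adapted to the convex domain, composed with mollification at a scale coupled to $\eps$ --- such that $\bff^\eps\to\bff$ in $\W$ and, crucially, $\int_{\Omega^\eps}\widetilde\Psi(\Phi_\eps(\nabla\bff^\eps))\to\int_\Omega\widetilde\Psi(\Phi(\nabla\bff))$, see \eqref{recseq}; the same family then powers the $\Gamma$-convergence statement (Lemma \ref{Gama-zb}) used to identify the weak limit as a minimizer. Your argument becomes viable once you replace $\bff$ by such a family and state the comparison estimate with $\bff^\eps$ on the right; as written, it produces only trivial bounds.
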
 
\noindent 
Note that we never evaluate $\widetilde \Psi$ on negative arguments. We could equivalently assume that $\widetilde \Psi$ is a continuous, convex, non-decreasing function $[0, + \infty[ \to\!\! [0, + \infty[$. Note also that the r.\,h.\,s.\;of \eqref{Psiest} may be infinite.  

As an immediate consequence of Theorem \ref{w11thm}, we deduce that if $\bff \in W^{1,p}(\Omega)$, then $\bu \in W^{1,p}(\Omega)$ with 
\[ \| \Phi(\nabla \bu)\|_{L^p(\Omega)} \leq \| \Phi(\nabla \bff)\|_{L^p(\Omega)} \] 
for $p \in ]1, \infty[$, and therefore also for $p = \infty$. 

Our strategy in the proof of Theorem \ref{w11thm} is first to obtain a version of \eqref{Psiest} for a family of smooth, uniformly convex approximations to $\Phi$. This is done using an energy method. An important point here is that the minimizers of approximations to $E^\lambda_{\bff}$ have $W^{2,2}$ regularity, which is enough to differentiate the Euler-Lagrange system and test it with a suitable function. Estimate \eqref{Psiest} is then used to obtain compactness of approximate minimizers in weak $W^{1,1}$ topology and exhibit a minimizer of $E^\lambda_{\bff}$ as their limit point. 

Since we are unable to localize \eqref{Psiest}, we need to work up to the boundary. For this reason we need convexity of $\Omega$, as it implies that the boundary term that appears in our energy estimate has a definite sign.

In fact, we can also obtain the following quantitative bound on the singular part of the minimizer of $\overline E^\lambda_{\bff}$ in the case that $\bff \in BV(\Omega)$. 
\begin{thm}\label{Dsuthm}
Suppose that $\Omega$ is convex and $\bff \in BV(\Omega)$. Let $\bu$ be the minimizer of $\overline E^\lambda_{\bff}$. We have 
\begin{equation} \label{Dsuest} 
\int_\Omega \Phi^\infty\left(\tfrac{\nabla^s \bu}{|\nabla^s \bu|}\right) \dd |\nabla^s \bu| \leq \int_\Omega \Phi^\infty\left(\tfrac{\nabla^s \bff}{|\nabla^s \bff|}\right) \dd |\nabla^s \bff|. 
\end{equation} 
\end{thm}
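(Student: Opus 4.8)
The plan is to deduce Theorem \ref{Dsuthm} from Theorem \ref{w11thm} by approximation, using a truncation inside \eqref{Psiest} to isolate the singular part. First I would fix a sequence $\bff_k \in W^{1,1}(\Omega)$ (for instance mollifications) with $\bff_k \to \bff$ in $L^2(\Omega)$ and \emph{area-strictly} in $BV(\Omega)$, that is $\bff_k \to \bff$ in $L^1$ together with $\int_\Omega \sqrt{1+|\nabla \bff_k|^2} \to \int_\Omega \sqrt{1+|\nabla^{ac}\bff|^2}\dd\Lb^m + |\nabla^s\bff|(\Omega)$; such a sequence exists by the standard theory of strict approximation in $BV$, the datum being understood to lie in $L^2(\Omega)$ so that $\overline E^\lambda_{\bff}$ is defined. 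Convexity of $\Omega$ enters only through Theorem \ref{w11thm}. By that theorem each $E^\lambda_{\bff_k}$ has a minimizer $\bu_k \in W^{1,1}(\Omega)$, and since $\overline E^\lambda_{\bff_k} \le E^\lambda_{\bff_k}$ with equality on $W^{1,1}$ and $\overline E^\lambda_{\bff_k}$ is strictly convex, this $\bu_k$ is also the unique minimizer of $\overline E^\lambda_{\bff_k}$.

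Next I would establish convergence of the minimizers. Testing with $\bff_k$ bounds $E^\lambda_{\bff_k}(\bu_k)$ uniformly, hence bounds $\bu_k$ in $BV(\Omega)\cap L^2(\Omega)$ via $\Phi \ge C_1|\cdot|$. A weak-$*$ limit $\bu_k \to \bu$ in $BV$ (so in $L^1$) minimizes $\overline E^\lambda_{\bff}$ by lower semicontinuity combined with $\bff_k \to \bff$ in $L^2$; uniqueness identifies the limit with the minimizer $\bu$ of $\overline E^\lambda_{\bff}$ and promotes convergence to the full sequence. The crucial step is then to apply \eqref{Psiest} with the even convex truncation $\widetilde\Psi_a(s) = \max\{|s|-a,0\}$, $a>0$, which gives for every $a$ and $k$
\[ \int_\Omega \max\{\Phi(\nabla\bu_k)-a,0\} \le \int_\Omega \max\{\Phi(\nabla\bff_k)-a,0\}. \]

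It then remains to pass to the limit $k\to\infty$, and afterwards $a\to\infty$, on each side. The integrand $\xi \mapsto \max\{\Phi(\xi)-a,0\}$ is convex of linear growth with recession function $\Phi^\infty$. On the left, lower semicontinuity of the associated relaxed functional under $L^1$ convergence yields
\[ \liminf_{k\to\infty}\int_\Omega \max\{\Phi(\nabla\bu_k)-a,0\} \ge \int_\Omega \max\{\Phi(\nabla^{ac}\bu)-a,0\}\dd\Lb^m + \int_\Omega \Phi^\infty\left(\tfrac{\nabla^s\bu}{|\nabla^s\bu|}\right)\dd|\nabla^s\bu| \ge \int_\Omega \Phi^\infty\left(\tfrac{\nabla^s\bu}{|\nabla^s\bu|}\right)\dd|\nabla^s\bu|. \]
On the right, area-strict convergence $\bff_k\to\bff$ lets me invoke the Reshetnyak continuity theorem, applied to the $1$-homogeneous extension $(t,\xi)\mapsto \max\{t\Phi(\xi/t)-ta,0\}$ of the integrand, which extends continuously by $\Phi^\infty$ at $t=0$, so that
\[ \lim_{k\to\infty}\int_\Omega \max\{\Phi(\nabla\bff_k)-a,0\} = \int_\Omega \max\{\Phi(\nabla^{ac}\bff)-a,0\}\dd\Lb^m + \int_\Omega \Phi^\infty\left(\tfrac{\nabla^s\bff}{|\nabla^s\bff|}\right)\dd|\nabla^s\bff|. \]
Combining the three displays and letting $a\to\infty$, the absolutely continuous remainders vanish by dominated convergence (their integrands are dominated by $\Phi(\nabla^{ac}\bu),\Phi(\nabla^{ac}\bff)\in L^1$), leaving precisely \eqref{Dsuest}.

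The main obstacle is conceptual rather than computational: the singular-part functional $\bw\mapsto\int_\Omega \Phi^\infty(\tfrac{\nabla^s\bw}{|\nabla^s\bw|})\dd|\nabla^s\bw|$ is not lower semicontinuous, so one cannot pass to the limit in it directly, and a bound on the \emph{total} relaxed energy would not separate the absolutely continuous and singular contributions. The truncation $\widetilde\Psi_a$ resolves this: subtracting the bounded level $a$ suppresses the absolutely continuous energy as $a\to\infty$ while preserving the recession, and the asymmetry that only a \emph{lower} bound is needed on the $\bu_k$ side (from convex lower semicontinuity) against an \emph{exact} limit on the $\bff_k$ side (from Reshetnyak continuity, exploiting the freedom to choose $\bff_k$ area-strictly) is exactly what makes the two ends match.
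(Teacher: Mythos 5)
Your proof is correct, and it reaches \eqref{Dsuest} by a genuinely different route than the paper. The paper does \emph{not} deduce Theorem \ref{Dsuthm} from Theorem \ref{w11thm}; instead it returns to the regularized problems $E^{\lambda,\eps}_{\bff^\eps}$ on the smoothed convex domains $\Omega^\eps$, applies the key superlinear estimate (Lemma \ref{mainlem}) there with the same truncation $\widetilde\Psi(p)=(|p|-l)_+$ that you use, and handles the two sides of the inequality with $BV$-adapted versions of its recovery-sequence and $\Gamma$-convergence lemmata (Lemmata \ref{goodseqbv} and \ref{Gama-zbbv}): weak-$*$ lower semicontinuity of $\overline F_\Psi$ on the minimizer side, and a hand-built recovery family $\bff^\eps$ with $\int_{\Omega^\eps}\Psi_\eps(\nabla\bff^\eps)\to\overline F_\Psi(\bff)$ on the datum side. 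You instead take Theorem \ref{w11thm} as a black box, approximate the $BV$ datum area-strictly by $W^{1,1}$ functions, and replace the paper's explicit recovery construction by the Reshetnyak continuity theorem; the core analytic device (the truncation $(|\cdot|-a)_+$ killing the absolutely continuous part while preserving the recession function, combined with the asymmetric liminf/limit passage) is identical. What your version buys is modularity and brevity: no second trip through the $\eps$-regularization, no $\Gamma$-convergence machinery, and a clean appeal to standard $BV$ approximation theory. What the paper's version buys is self-containedness (its Lemma \ref{goodseqbv} does the job of Reshetnyak continuity by direct construction, taking care of the boundary via dilation) and the fact that it yields the existential part of Theorem \ref{w11thm} as a by-product rather than as a prerequisite. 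Two small points worth making explicit if you write this up: the Reshetnyak continuity step needs the joint continuity of $(t,\xi)\mapsto t(\Phi(\xi/t)-a)_+$ up to $t=0$, which holds because the recession limit of a convex function of linear growth is locally uniform; and the identification of the weak-$*$ limit of $\bu_k$ with the minimizer of $\overline E^\lambda_{\bff}$ should be spelled out via the inequality $\overline E^\lambda_{\bff_k}(\bu_k)\le\overline E^\lambda_{\bff_k}(\bw)$ for fixed $\bw$, passing to the limit on both sides as you indicate.
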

\noindent We note that the first (existential) assertion of Theorem \ref{w11thm} follows from Theorem \ref{Dsuthm}. We decided to present the two results as separate theorems because their proofs are somewhat different (although both are based on Lemma \ref{mainlem}). In particular in Theorem \ref{w11thm} the minimizer $\bu$ is exhibited as a limit of a weakly convergent sequence in $W^{1,1}(\Omega)$, without introducing $\overline{E}^\lambda_{\bff}$ and resorting to any weak-$*$ lower semicontinuity result.  

The assumption of convexity of $\Omega$ in Theorems \ref{w11thm} and \ref{Dsuthm} cannot be dropped. In fact, in the case of non-convex $\Omega$, the minimizer of $\overline E^\lambda_{\bff}$ might not belong to $W^{1,1}_{loc}$ even if $\bff$ is smooth up to the boundary, see e.\,g.\;\cite[Example 3]{tetris}. 

During the preparation of this manuscript, we learned about work \cite{Porretta2019}, where the case $\Phi = |\cdot|$ is considered. The author obtains inheritance of $W^{1,\infty}$ regularity without assuming convexity of $\Omega$. Additionally, assuming convexity of $\Omega$, inheritance of $W^{1,p}$ regularity is obtained for $p \in [2, +\infty]$, which is a special case of Theorem \ref{w11thm}.    

We would also like to mention a paper \cite{BeckBulicekGmeineder}, where existence of $W^{1,1}$ solutions is obtained in vectorial setting for functionals of linear growth with a regular enough source term instead of fidelity term. There, $\Phi$ is of form $\widetilde \Phi \circ |\cdot|$, with $\widetilde \Phi$ strictly convex and sufficiently regular with a bound on the tail of $\widetilde \Phi''$. However, $\Omega$ is only assumed to be simply connected.

On a side note, we point out that there are several results concerning solvability in Sobolev spaces of the minimization problem for integral functionals of linear growth with prescribed boundary condition under certain assumptions. For instance, in \cite{MaricondaTreu} suitable restrictions are imposed on the boundary datum, while in \cite{Bildhauer2002} (see also \cite{BeckSchmidt, BeckBulicekMaringova}) a quantitative strict convexity condition is imposed on the integrand. There are also related works on solvability of the least gradient problem in $BV$ with boundary condition prescribed in the trace sense (as opposed to the relaxed sense) and inheritance of (H\" older) continuity from the boundary datum, where various notions of strict convexity of $\Omega$ are assumed, see e.\,g.\;\cite{SternbergWilliamsZiemer, Gorny2018}. In all papers mentioned in this paragraph, except \cite{Bildhauer2002, BeckSchmidt}, only the scalar case is considered. 

A reader may ask whether we can apply our approach to the vectorial case, where $u, f \colon \Omega \to \Rn$, $n>1$. We do not directly use linear order of $\R$ via the comparison principle or such, instead relying only on energy-type estimates. However, we are only able to derive our estimates in the scalar case. In the course of proof of Theorem \ref{w11thm}, see Remark \ref{uwaga}, we explain where our method breaks down for $n>1$. We note that the $W^{2,2}$ regularity result that we show for the uniformly convex approximation is valid also in the vectorial case. We include it in full generality for possible future reference. 

\begin{comment} 
We note also that it is interesting to ask whether the convexity assumption on the integrand could be relaxed, or whether an analogous result holds in the setting of maps of bounded deformation (compare \cite{GmeinederKristensenARMA, GmeinederKristensenCalcVar}).   
\end{comment} 

Now, let us define $\overline{F} \colon L^2(\Omega) \to [0, +\infty]$ by the following formula: 
\begin{equation}\label{overlineF} \overline{F}(\bw) = \left\{\begin{array}{ll} \int_\Omega \Phi(\nabla^{ac} \bw) + \int_\Omega \Phi^\infty\left(\tfrac{\nabla^s \bw}{|\nabla^s \bw|}\right) \dd |\nabla^s \bw|& \text{if } \bw \in BV(\Omega),\\ 
+ \infty & \text{otherwise.} \end{array} \right. 
\end{equation} 
The minimization problem for $\overline{E}^\lambda_{\bff}$ coincides with the resolvent problem for the gradient flow of $\overline{F}$. Since $\overline{F}$ is convex and lower semicontinuous, it generates a gradient flow \cite[Corollary 20]{Brezis1971}, i.\,e.\;given $\bu_0 \in D(\overline F) = L^2(\Omega) \cap BV(\Omega)$ there exists exactly one $\bu \in W^{1,2}(0, \infty; L^2(\Omega))$ such that $\bu(0) = \bu_0$ and for a.\,e.\;$t>0$, 
\begin{equation}\label{gradfloweq} 
\bu_t \in - \partial \overline{F}(\bu).
\end{equation}  
As a corollary of our previous results, we obtain \nopagebreak[9]
\begin{thm} \label{gradflow} 
Given $\bu_0 \in L^2(\Omega) \cap BV(\Omega)$, let $\bu \in W^{1,2}(0, \infty; L^2(\Omega))$ be the solution to \eqref{gradfloweq} with $\bu(0)=\bu_0$. For a.\,e.\;$t>0$ there holds 
\begin{equation} \label{Dsuestgf} 
\int_\Omega \Phi^\infty\left(\tfrac{\nabla^s \bu(t)}{|\nabla^s \bu(t)|}\right) \dd |\nabla^s \bu(t)| \leq \int_\Omega \Phi^\infty\left(\tfrac{\nabla^s \bu_0}{|\nabla^s \bu_0|}\right) \dd |\nabla^s \bu_0|. 
\end{equation} 
If moreover $\bu_0 \in W^{1,1}(\Omega)$ then, for a.\,e.\;$t>0$, $\bu(t) \in W^{1,1}(\Omega)$ and for any even, convex function $\widetilde \Psi \colon \R \to [0, +\infty[$ and a.\,e.\;$t>0$,
	\begin{equation} \label{Psiestgf}
	\int_\Omega\widetilde \Psi(\Phi(\nabla \bu(t))) \leq \int_\Omega\widetilde \Psi(\Phi(\nabla \bu_0)).  
	\end{equation}
\end{thm}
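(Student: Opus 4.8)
The plan is to reduce everything to the already-proven static results by exploiting the identification, noted above, of the minimizer of $\overline{E}^\lambda_{\bff}$ with the resolvent $J_\lambda \bff := (I + \lambda \partial \overline{F})^{-1}\bff$ of the flow generated by $\overline{F}$, and then to propagate the estimates of Theorems~\ref{w11thm} and \ref{Dsuthm} along the implicit Euler (minimizing movement) scheme. Fix $t>0$, put $\tau = t/n$, and set $\bu^0_n = \bu_0$, $\bu^k_n = J_\tau \bu^{k-1}_n$ for $k=1,\dots,n$, so that each $\bu^k_n$ is the minimizer of $\overline{E}^\tau_{\bu^{k-1}_n}$. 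By the exponential formula for gradient flows of convex functionals (see \cite{Brezis1971}), $\bu^n_n \to \bu(t)$ in $L^2(\Omega)$ as $n\to\infty$. Applying Theorem~\ref{Dsuthm} at each step and iterating yields the discrete analogue of \eqref{Dsuestgf} for $\bu^n_n$; likewise, when $\bu_0\in W^{1,1}(\Omega)$, Theorem~\ref{w11thm} keeps every iterate in $W^{1,1}(\Omega)$ and chains \eqref{Psiest} into $\int_\Omega \widetilde\Psi(\Phi(\nabla \bu^n_n)) \le \int_\Omega \widetilde\Psi(\Phi(\nabla \bu_0))$ for every admissible $\widetilde\Psi$ and every $n$. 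Testing the $k$-th minimization with the competitor $\bu^{k-1}_n$ also gives $\overline{F}(\bu^n_n) \le \overline{F}(\bu_0)$, so, $\Phi$ being of linear growth, $\{\bu^n_n\}_n$ is bounded in $BV(\Omega)$ and converges weakly-$*$ to $\bu(t)$.

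The remaining task, and what I expect to be the main obstacle, is passing the discrete form of \eqref{Dsuestgf} to the limit. The singular-part functional $\bw\mapsto \int_\Omega \Phi^\infty\!\left(\tfrac{\nabla^s \bw}{|\nabla^s \bw|}\right)\dd|\nabla^s \bw|$ is \emph{not} weak-$*$ lower semicontinuous on $BV(\Omega)$: a mollification of a jump function has vanishing singular part yet converges weakly-$*$ to a function with nonzero singular part. To get around this I would work with the recession functional $\bw \mapsto \int_\Omega \Phi^\infty\!\left(\tfrac{\nabla \bw}{|\nabla \bw|}\right)\dd|\nabla \bw|$ (with $\Phi^\infty$ extended $1$-homogeneously), which is convex and weak-$*$ lower semicontinuous by Reshetnyak's theorem, combined with the convergence of energies $\overline{F}(\bu^n_n)\to\overline{F}(\bu(t))$ along the scheme. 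The crux is to rule out that absolutely continuous gradient mass of the iterates concentrates, in the limit, into the singular part of $\bu(t)$, i.e.\ to upgrade the weak-$*$ convergence to strong $L^1(\Omega)$ convergence $\nabla^{ac}\bu^n_n \to \nabla^{ac}\bu(t)$. Granting this, the absolutely continuous contributions to the recession functional converge and cancel, isolating the singular parts and producing \eqref{Dsuestgf}; absent it, one would instead re-run the underlying energy method on the gradient flows of the uniformly convex approximations of $\Phi$, whose solutions are smooth enough to differentiate in space, and pass to the limit there.

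For the final assertion I would argue as follows. When $\bu_0\in W^{1,1}(\Omega)$ the right-hand side of \eqref{Dsuestgf} vanishes, and since $\Phi^\infty \ge C_1 > 0$ on $\mathbb S^{m-1}$ this forces $|\nabla^s \bu(t)| = 0$, i.e.\ $\bu(t)\in W^{1,1}(\Omega)$. To obtain \eqref{Psiestgf} I would promote the $L^2$ convergence of $\bu^n_n$ to weak convergence in $W^{1,1}(\Omega)$. Since $\Phi(\nabla \bu_0)\in L^1(\Omega)$, the de la Vallée--Poussin criterion supplies an even, convex, superlinear $\widetilde\Psi_0$ with $\int_\Omega \widetilde\Psi_0(\Phi(\nabla \bu_0)) < \infty$; the discrete bound then controls $\int_\Omega \widetilde\Psi_0(\Phi(\nabla \bu^n_n))$ uniformly in $n$, and, $\Phi$ being of linear growth, this renders $\{\nabla \bu^n_n\}_n$ equi-integrable. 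By Dunford--Pettis, $\nabla \bu^n_n \rightharpoonup \nabla \bu(t)$ weakly in $L^1(\Omega)$, hence $\bu^n_n \rightharpoonup \bu(t)$ weakly in $W^{1,1}(\Omega)$. Finally, for an arbitrary admissible $\widetilde\Psi$ the integrand $\xi \mapsto \widetilde\Psi(\Phi(\xi))$ is convex and nonnegative, so $\bw \mapsto \int_\Omega \widetilde\Psi(\Phi(\nabla \bw))$ is weakly lower semicontinuous on $W^{1,1}(\Omega)$, and passing to the limit in the discrete bound yields \eqref{Psiestgf}.
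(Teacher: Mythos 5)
Your treatment of the $W^{1,1}$ part is essentially the paper's argument and is correct: iterate Theorem~\ref{w11thm} along the implicit Euler scheme with a superlinear $\widetilde\Psi_0$ supplied by de la Vall\'ee Poussin, use Dunford--Pettis to upgrade the convergence $\bu^n_n \to \bu(t)$ to weak $W^{1,1}(\Omega)$ convergence (which by itself already gives $\bu(t)\in W^{1,1}(\Omega)$, without routing through \eqref{Dsuestgf}), and conclude \eqref{Psiestgf} by weak lower semicontinuity of convex gradient integrals.

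For \eqref{Dsuestgf}, however, there is a genuine gap, and it is exactly the one you flag yourself: the functional $\bw\mapsto\int_\Omega\Phi^\infty\bigl(\tfrac{\nabla^s\bw}{|\nabla^s\bw|}\bigr)\dd|\nabla^s\bw|$ is not weak-$*$ lower semicontinuous on $BV(\Omega)$, so the discrete singular-part estimate obtained by iterating \eqref{Dsuest} does not pass to the limit $n\to\infty$. Neither of your proposed remedies closes this. Strong $L^1$ convergence $\nabla^{ac}\bu^n_n\to\nabla^{ac}\bu(t)$ is not established and does not follow from convergence of the energies $\overline F(\bu^n_n)\to\overline F(\bu(t))$ (which itself you have not proved --- the scheme only gives $\overline F(\bu^n_n)\le\overline F(\bu_0)$ and lower semicontinuity in the limit, not convergence); and ``re-running the energy method on the approximate flows'' is not carried out. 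The device the paper uses is to propagate along the scheme not the singular-part inequality \eqref{Dsuest} but the stronger intermediate estimate $\overline F_\Psi(\bu)\le\overline F_\Psi(\bff)$ from the proof of Theorem~\ref{Dsuthm} (inequality \eqref{overlineFbound}), with $\Psi=(\Phi(\cdot)-l)_+$. The functional $\overline F_\Psi$ contains \emph{both} the absolutely continuous part with integrand $(\Phi(\cdot)-l)_+$ and the singular part with integrand $\Psi^\infty=\Phi^\infty$, so it is a genuine relaxed functional and is weak-$*$ lower semicontinuous by Goffman--Serrin/Reshetnyak. Iterating gives $\overline F_\Psi(\bu^{n,k}(t))\le\overline F_\Psi(\bu_0)$, lower semicontinuity gives $\overline F_\Psi(\bu(t))\le\overline F_\Psi(\bu_0)$, and then letting $l\to+\infty$ annihilates the absolutely continuous contribution $\int_\Omega(\Phi(\nabla^{ac}\bu_0)-l)_+$ on the right while the corresponding term on the left is simply dropped, yielding \eqref{Dsuestgf}. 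In short: you need to carry the truncated full functional through the limit, not the singular part alone.
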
 

On many occasions, we use a standard approximate identity $(\varphi_\delta)_{\delta>0}$ on $\R^N$, $N \in \N$. This is a family of functions of form $\varphi_\delta = \frac{1}{\delta^N}\varphi\left(\frac{\cdot}{\delta}\right)$, where $\varphi \in C^\infty_c(\R^N, [0,1])$ is a radially symmetric function whose support is contained in the unit ball $B_1(0)$, such that $\int_{\R^N} \varphi=1$. %We recall that given $\bg \in L^1(\Omega, \R^N)$, $\delta>0$, we can define $\varphi_{\delta} *\bg \in C^\infty(\overline{\Omega}, \R^N)$ by 
% \[(\varphi_{\delta} *\bg)(x) = \int_\Omega \varphi_\delta(x - y) \bg(y)\dd y. \]
%With this definition, we have $\varphi_{\delta} *\bg \to \bg$ in $L^1(\Omega, \R^N)$ as $\delta \to 0^+$.

Throughout the paper, we use the summation convention except when explicitly stated. Alternatively, we also use index free notation with stacked vertical dots $\cdot$, $\cddot$, $\cdddot$, depending on how many pairs of indices are contracted. A single dot is often omitted, in line with standard notation for multiplying matrices. The symbol $\nabla$ is used to denote derivation with respect to the spatial variable $x \in \Omega$, while $D$ denotes derivatives of functions such as $\Phi$ with respect to Euclidean spaces they are defined on. The notation $|\cdot|$ will stand for the Euclidean norm on $\Rm$, $\R^{m^2}$ etc. 

\section{Convex functions of at most linear growth} 
It is well known that a convex function $\Psi \colon \R^N\to [0, +\infty[$, $N\in \N$ is locally Lipschitz, and hence differentiable $\Lb^N$-a.\,e. This a.\,e.\;defined derivative, which we denote $D\Psi$, belongs to $L^{\infty}_{loc}(\R^N, \R^N)$ and coincides with the distributional derivative of $\Psi$. Furthermore, $D \Psi \in BV_{loc}(\R^N, \R^N)$ \cite{evansgariepy}. In the case  that $\Psi$ is of at most linear growth, the situation is remarkably more convenient. 
\begin{prop} \label{convexprop} 
	 Suppose that $\Psi \colon \R^N\to [0, +\infty[$ is a convex function of at most linear growth. Then 
	 \[D \Psi \in L^\infty(\R^N, \R^N), \quad D^2 (\varphi * \Psi) \in L^\infty(\R^N, \R^{N \times N}),\]
	 for any $\varphi \in C_c(\R^N)$. 
\end{prop}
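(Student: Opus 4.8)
The plan is to treat the two assertions separately, deriving both from the basic consequence of convexity and linear growth that $\Psi$ is globally Lipschitz.

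For the first claim, I would fix a point $x$ of differentiability of $\Psi$ (which is $\Lb^N$-a.\,e.\ point, since $\Psi$ is locally Lipschitz) and set $p = D\Psi(x)$. Convexity gives the supporting inequality $\Psi(y) \ge \Psi(x) + p\cdot(y-x)$ for all $y$; choosing $y = x + t\,p/|p|$ with $t>0$ (assuming $p\neq 0$) and using $\Psi \ge 0$ yields $t|p| \le \Psi(x+ t\,p/|p|) \le C_2(1+|x|+t)$. Dividing by $t$ and sending $t\to\infty$ gives $|p|\le C_2$. Hence $|D\Psi|\le C_2$ a.\,e., and since this a.\,e.\ derivative coincides with the distributional one, $D\Psi \in L^\infty(\RN,\RN)$ with $\|D\Psi\|_\infty \le C_2$.

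For the second claim, note first that since $\varphi\in C_c(\RN)$ while $\Psi\in L^1_{loc}$ has $D^2\Psi$ a locally finite $\R^{N\times N}$-valued Radon measure (this is where $D\Psi\in BV_{loc}$ enters; by convexity this Hessian measure is moreover positive semidefinite), differentiation commutes with convolution and $D^2(\varphi*\Psi) = \varphi * D^2\Psi$, the right-hand side being the continuous function $x\mapsto \int_{\RN}\varphi(x-y)\dd D^2\Psi(y)$. If $\supp\varphi\subset B_R(0)$, then for each $x$ this integral runs over $B_R(x)$, so $|D^2(\varphi*\Psi)(x)| \le \|\varphi\|_\infty\,|D^2\Psi|(B_R(x))$. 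Everything thus reduces to a \emph{uniform in $x$} bound on the mass $|D^2\Psi|(B_R(x))$, which is the crux of the argument. The key observation is that, $D^2\Psi$ being positive semidefinite, its total variation is controlled by its trace, $|D^2\Psi|\le \tr D^2\Psi = \Delta\Psi$ as measures (pointwise the Frobenius norm of a positive semidefinite matrix is at most its trace). It then suffices to bound $\Delta\Psi(B_R(x))$, and here I would convert the Laplacian mass into a boundary flux of $D\Psi$: applying the Gauss--Green identity on the ball, $\Delta\Psi(B_R(x)) = \int_{\partial B_R(x)} D\Psi\cdot\norm\dd\Hd^{N-1} \le C_2\,\Hd^{N-1}(\partial B_R(x)) = c(N)\,C_2\,R^{N-1}$, using $|D\Psi|\le C_2$ from the first part. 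Since the right-hand side is independent of $x$, we conclude $|D^2(\varphi*\Psi)(x)|\le c(N)\,C_2\,R^{N-1}\,\|\varphi\|_\infty$ for every $x$, i.\,e.\ $D^2(\varphi*\Psi)\in L^\infty(\RN, \R^{N\times N})$.

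I expect the only genuine technical point to be the rigorous application of the divergence theorem to the merely $BV_{loc}$ (indeed $L^\infty$ with measure divergence) field $D\Psi$ on the ball. I would sidestep it by first mollifying, $\Psi^\eps = \varphi_\eps * \Psi$, for which $\Psi^\eps$ is smooth and convex with $|D\Psi^\eps|\le C_2$ and $D^2\Psi^\eps\ge 0$, running the trace/flux estimate classically to get $\int_{B_R(x)}|D^2\Psi^\eps|\le \int_{B_R(x)}\Delta\Psi^\eps \le c(N)\,C_2\,R^{N-1}$, and then passing to the limit using the weak-$*$ convergence $D^2\Psi^\eps\,\Lb^N \rightharpoonup D^2\Psi$ together with lower semicontinuity of total variation on the open ball $B_R(x)$.
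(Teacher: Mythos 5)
Your proof is correct. The first half is essentially the paper's argument in a different guise: the paper restricts $\Psi$ to coordinate lines and plays monotonicity of the one-dimensional derivative against the upper growth bound, while you use the supporting-hyperplane inequality along the gradient direction; both yield $\|D\Psi\|_{L^\infty}\le C_2$. The second half, however, takes a genuinely different route. The paper never touches the full Hessian measure: it bounds each diagonal entry $\partial^2(\varphi*\Psi)/(\partial p^i)^2$ by slicing $\R^N$ into lines parallel to $e_i$ (on each line the monotone derivative has total variation at most $2C_2$) and integrating via Tonelli, and then controls the mixed entries by Cauchy--Schwarz/Sylvester applied to the convexity of $\varphi*\Psi$ on two-dimensional slices. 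You instead work with $D^2\Psi$ as a positive-semidefinite matrix-valued Radon measure, dominate its total variation by its trace, and convert the Laplacian mass on a ball into a boundary flux of $D\Psi$ via Gauss--Green (rigorously, after mollification and weak-$*$ lower semicontinuity). Your trace trick disposes of the off-diagonal entries in one stroke and is geometrically cleaner; the paper's slicing argument is more elementary, needing only one-dimensional facts about monotone functions and no divergence theorem or weak-$*$ compactness. Both arguments rest on exactly the same input, namely the uniform bound $|D\Psi|\le C_2$, and both produce a bound of the same order, $C\,C_2\,(\diam\supp\varphi)^{N-1}\sup|\varphi|$.
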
 
\begin{proof} 
	For $\xi_0 \in \R^N$, $i =1, \ldots, N$, let $L_{\xi_0}^i = \{\xi_0 + t  e_i\colon t \in \R\}$ be a line parallel to the $i$-th coordinate axis of $\R^N$. The restriction $\left.\Psi\right|_{L_{\xi_0}^i}\!\!\!\colon \R \to [0, +\infty[$ is convex. Hence, $\left.\frac{\partial \Psi}{\partial p^i}\right|_{L_{\xi_0}^i}\!\!\!$ is monotone. Therefore 
	\begin{equation} \label{linftyboundderivative} 
	- C_2\leq \left.\frac{\partial \Psi}{\partial p^i}\right|_{L_{\xi_0}^i} \!\!\!\leq C_2, 
	\end{equation} 
	lest the second inequality in \eqref{linear} be violated. Since $i$ and $\xi_0$ are arbitrary, we have demonstrated the first part of the assertion. 
	
	By \eqref{linftyboundderivative} and, again, monotonicity of $\left.\frac{\partial \Psi}{\partial p^i}\right|_{L_{\xi_0}^i}$\!\!\!, we have for any $i$, $\xi_0$ the following estimate,
	\[ \left|\frac{\partial^2 \Psi}{(\partial p^i)^2}\right|\left(L_{\xi_0}^i\right) =  \frac{\partial^2 \Psi}{(\partial p^i)^2}\left(L_{\xi_0}^i\right) \leq 2 C_2.\]
	Hence, by Tonelli's theorem, for $\xi \in \R^N$ we obtain the estimate,  
	\[\frac{\partial^2 (\varphi*\Psi)}{(\partial p^i)^2} (\xi)=  \varphi * \frac{\partial^2 \Psi}{(\partial p^i)^2}(\xi) = \int_{\R^N} \varphi(\xi - \zeta) \dd \frac{\partial^2 \Psi}{(\partial p^i)^2}(\zeta) \leq 2C_2 (\diam \supp \varphi)^{N-1} \sup \varphi. \]
	Finally, we note that $\varphi *\Psi$ is convex, and therefore $\left.\varphi*\Psi\right|_{L_{\xi}^i\times L_{\xi}^j}$ for $\xi \in \R^N$, $i,j=1,\ldots, N$ are convex as well. Hence, by Sylvester's criterion, we obtain a bound on mixed derivatives: 
	\[\frac{\partial^2 (\varphi*\Psi)}{\partial p^i\partial p^j} \leq \left(\frac{\partial^2 (\varphi*\Psi)}{(\partial p^i)^2}	\frac{\partial^2 (\varphi*\Psi)}{(\partial p^j)^2}\right)^\frac{1}{2} \leq \frac{1}{2}\left(\frac{\partial^2 (\varphi*\Psi)}{(\partial p^i)^2}	+ \frac{\partial^2 (\varphi*\Psi)}{(\partial p^j)^2}\right),\]
	which completes the proof. 
\end{proof} 

\begin{comment} 
The necessity of condition \eqref{linear} is related only to the presence of boundary. If $\Omega$ is the $m$-torus $\T^m$, instead of linear growth it suffices to assume any \emph{controllable} growth in the sense that
\begin{equation} \label{controllable} 
|\partial \Phi (A)| \leq C(1 + |A|^{p-1}) \quad \text{for all } A \in \Rmn 
\end{equation} 
with $C >0$, $p\geq2$. No coercivity condition is necessary. Of course, any functional of linear growth satisfies \eqref{controllable}.  

\begin{thm} \label{w11thmT} 
	Suppose that $\Omega=\T^m$ and \eqref{controllable} holds. If $\bff \in W^{1,1}(\T^m, \Rn)$, then there exists a minimizer $\bu \in W^{1,1}(\T^m, \Rn)$ of $E^\lambda_{\bff}$.
\end{thm} 
\end{comment} 

\section{The approximate problem} 
In this section, we introduce a smoothed version of the functional $E^\lambda_{\bff}$. We consider a smooth, uniformly convex approximation $(\Phi_\eps)_{\eps>0}$ of $\Phi$ given by
\begin{equation} \label{Phieps} 
\Phi_\eps(\xi) = (\varphi_\eps * \Phi) (\xi) + \tfrac{\eps}{2}|\xi|^2
\end{equation} 
for $\xi \in \Rm$, where $(\varphi_\eps)_{\eps>0}$ is a standard approximate identity on $\Rm$. Further, we let $(\Omega^\eps)_{\eps>0}$ be a family of smooth, convex subsets of $\Rm$, such that $\Omega \subset \Omega^\eps$ for $\eps >0$ and $\Omega^\eps \to \Omega$ as $\eps \to 0^+$ in Hausdorff distance. We can produce such a family similarly as in \cite[Lemma A.3]{GLMreg}. Given $\bg \in L^2(\Omega^\eps)$, we define $E^{\lambda, \eps}_{\bg} \colon W^{1,2}(\Omega^\eps) \to [0, +\infty[$ by 
\begin{equation}\label{defEeps} 
E^{\lambda, \eps}_{\bg}(\bw) = \lambda \int_{\Omega^\eps} \Phi_\eps(\nabla \bw) + \frac{1}{2} \int_{\Omega^\eps} |\bw - \bg|^2.  
\end{equation}  

\begin{prop} \label{classic} There exists a unique minimizer $\bu^\eps \in W^{1,2}(\Omega^\eps)$ of $E^{\lambda, \eps}_{\bg}$. Furthermore, 
	\begin{itemize}
		\item[(a)] $\bu^\eps \in W^{2,2}(\Omega^\eps)$;
		\item[(b)] $D \Phi_\eps (\nabla \bu^\eps) \in W^{1,2}(\Omega^\eps, \Rm)$ and $D^2 \Phi_\eps (\nabla \bu^\eps) \in L^\infty(\Omega^\eps, \R^{m^2})$; 
		\item[(c)] $\bu^\eps$ satisfies the Euler-Lagrange system 
	\begin{equation} \label{ELeqeps} 
	\bu^\eps - \bg = \lambda\, \div(D \Phi_\eps(\nabla \bu^\eps)) \quad \text{in } \Omega^\eps, 
	\end{equation} 
	\begin{equation} \label{ELbceps} 
	D \Phi_\eps(\nabla \bu^\eps)\cdot \norm^{\Omega^\eps} = 0 \quad \text{on } \partial\Omega^\eps.
	\end{equation} 	
	\end{itemize} 
\end{prop}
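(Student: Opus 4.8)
The plan is to produce $\bu^\eps$ by the direct method and then upgrade its regularity by the difference-quotient technique, exploiting that $\Phi_\eps$ is smooth and uniformly elliptic.

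First I would record the structural properties of $\Phi_\eps$. Since $\varphi_\eps * \Phi$ is smooth and convex, $\Phi_\eps \in C^\infty(\Rm)$ and $D^2\Phi_\eps = D^2(\varphi_\eps * \Phi) + \eps\,\mathrm{Id} \geq \eps\,\mathrm{Id}$, so $\Phi_\eps$ is uniformly convex; moreover Proposition~\ref{convexprop} gives $D^2\Phi_\eps \in L^\infty(\Rm,\R^{m^2})$, whence $D\Phi_\eps$ is globally Lipschitz and $\Phi_\eps$ has at most quadratic growth. The bound $\Phi_\eps(\xi) \geq \tfrac{\eps}{2}|\xi|^2$ yields
\[ E^{\lambda,\eps}_{\bg}(\bw) \geq \tfrac{\lambda\eps}{2}\int_{\Omega^\eps}|\nabla\bw|^2 + \tfrac12\int_{\Omega^\eps}|\bw-\bg|^2, \]
which is coercive on $W^{1,2}(\Omega^\eps)$. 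As $E^{\lambda,\eps}_{\bg}$ is convex in $\nabla\bw$ and in $\bw$, it is weakly lower semicontinuous, and strictly convex as a whole; the direct method on the reflexive space $W^{1,2}(\Omega^\eps)$ then produces a unique minimizer $\bu^\eps$. Because $D\Phi_\eps$ has linear growth, $E^{\lambda,\eps}_{\bg}$ is G\^ateaux differentiable and $\bu^\eps$ obeys the weak Euler--Lagrange identity
\[ \lambda\int_{\Omega^\eps} D\Phi_\eps(\nabla\bu^\eps)\cdot\nabla\varphi + \int_{\Omega^\eps}(\bu^\eps-\bg)\,\varphi = 0 \qquad \text{for all } \varphi \in W^{1,2}(\Omega^\eps). \]

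The heart of the matter is (a). I would prove $\bu^\eps \in W^{2,2}$ by Nirenberg's method of difference quotients $\Delta_h \bw(x) = h^{-1}(\bw(x+he_k)-\bw(x))$. For an interior cutoff $\eta$, testing the weak identity with $\varphi = \Delta_{-h}(\eta^2\Delta_h\bu^\eps)$ and writing $\Delta_h[D\Phi_\eps(\nabla\bu^\eps)] = A_h\,\Delta_h\nabla\bu^\eps$ with $A_h = \int_0^1 D^2\Phi_\eps(\nabla\bu^\eps + sh\,\Delta_h\nabla\bu^\eps)\,\mathrm ds$ satisfying $\eps\,\mathrm{Id}\le A_h\le \|D^2\Phi_\eps\|_\infty\,\mathrm{Id}$, the ellipticity produces a good term $\lambda\eps\int\eta^2|\Delta_h\nabla\bu^\eps|^2$; the cutoff cross-term and the fidelity term (estimated directly, without differencing $\bg$, via $\|\Delta_{-h}\psi\|_{L^2}\le\|\nabla\psi\|_{L^2}$ and Young's inequality) are then absorbed, giving a bound on $\int\eta^2|\Delta_h\nabla\bu^\eps|^2$ uniform in $h$ and hence $\nabla\bu^\eps \in W^{1,2}_{loc}$. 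I would obtain regularity up to $\partial\Omega^\eps$ by the same scheme with \emph{tangential} difference quotients after flattening the boundary by a smooth diffeomorphism (admissible since $\partial\Omega^\eps$ is smooth); tangential difference quotients of $W^{1,2}(\Omega^\eps)$ test functions remain admissible for this Neumann problem, which yields all second derivatives except the purely normal one in $L^2$ near the boundary. The remaining normal--normal derivative is recovered from the equation: in flattened coordinates the leading coefficient $a^{mm} \geq \eps$ multiplies $\partial_{mm}\bu^\eps$, so it is controlled in $L^2$ by the right-hand side $\bg - \bu^\eps \in L^2$ together with the already-bounded mixed and tangential derivatives.

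With $\bu^\eps \in W^{2,2}(\Omega^\eps)$ in hand, (b) follows: since $D\Phi_\eps \in C^\infty$ is Lipschitz and $\nabla\bu^\eps \in W^{1,2}$, the chain rule gives $D\Phi_\eps(\nabla\bu^\eps) \in W^{1,2}(\Omega^\eps,\Rm)$ with $\nabla[D\Phi_\eps(\nabla\bu^\eps)] = D^2\Phi_\eps(\nabla\bu^\eps)\,\nabla^2\bu^\eps$, while $D^2\Phi_\eps(\nabla\bu^\eps)\in L^\infty$ is immediate from Proposition~\ref{convexprop}. Finally (c) is obtained by integrating the weak identity by parts, now legitimate because $D\Phi_\eps(\nabla\bu^\eps)\in W^{1,2}$: testing against $\varphi\in C^\infty_c(\Omega^\eps)$ gives the pointwise equation \eqref{ELeqeps}, and then the surviving boundary integral $\lambda\int_{\partial\Omega^\eps} (D\Phi_\eps(\nabla\bu^\eps)\cdot\norm^{\Omega^\eps})\,\varphi$ must vanish for every trace $\varphi$, giving the natural boundary condition \eqref{ELbceps}. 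I expect the boundary step of the $W^{2,2}$ argument to be the main obstacle, since it requires the flattening and the separate recovery of the normal derivative; here only smoothness of $\partial\Omega^\eps$ is needed, whereas convexity of $\Omega^\eps$ will become essential only later, for the $\eps$-uniform estimates.
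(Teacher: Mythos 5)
Your proposal is correct and follows essentially the same route as the paper: direct method on $W^{1,2}(\Omega^\eps)$ using the coercivity from the $\tfrac{\eps}{2}|\xi|^2$ term, the weak Euler--Lagrange identity from the at-most-linear growth of $D\Phi_\eps$ (Proposition \ref{convexprop}), and $W^{2,2}$ regularity via interior and tangential difference quotients after flattening the boundary, with the normal--normal derivative recovered from the equation by uniform ellipticity --- which is exactly the argument the paper defers to its appendix. The deduction of (b) by the chain rule and of (c) by integration by parts likewise matches the paper.
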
 	
\begin{proof} 
	$E^{\lambda, \eps}_{\bg}$ is a proper, convex and coercive functional on $W^{1,2}(\Omega^\eps)$, hence it is weakly lower semicontinuous and attains minimum. By strict convexity, the minimizer $\bu^\eps$ is unique. 
	
	Using convexity of $\Phi$, 
	\[ \Phi(\xi) \leq \varphi_\eps* \Phi(\xi) \leq \max_{\overline{B(\xi, \eps)}} \Phi.\]
	Hence, $\varphi_\eps* \Phi$ is of linear growth. Owing to Proposition \ref{convexprop}, there exists $C>0$, such that 
	\begin{equation} \label{DPhiepsest} 
    	|D\Phi_\eps(\xi)| \leq |D (\varphi_\eps *\Phi)(\xi)| + \eps |\xi| \leq C + \eps |\xi| \quad \text{for } \xi \in \R^m. 
	\end{equation} 
	With this growth condition at hand, one can easily prove that $\bu^\eps$ is a weak solution to the Euler-Lagrange system (\ref{ELeqeps}, \ref{ELbceps}). 
	
	Next, again using Proposition \ref{convexprop}, we obtain,
	\[|D^2\Phi_\eps(\xi)| \leq C(\eps) \quad \text{for } \xi \in \R^m.\]
	By flattening the boundary and applying a variant of tangential difference quotient technique, we then obtain $\bu^\eps \in W^{2,2}(\Omega^\eps)$. We present this argument in detail in the appendix. Consequently, 
\[\nabla(D\Phi_\eps(\nabla \bu^\eps)) = D^2\Phi_\eps(\nabla \bu^\eps)\, \nabla^2 \bu^\eps \in L^2(\Omega^\eps, \R^{m^2}).\]
\end{proof} 
\begin{comment} 
Suppose that $(\bw^\eps)_{\eps>0 }$ is a family of maps such that $\bw^\eps \in W^{1,1}_{loc}(\Omega^\eps, \R^n)$ for $\eps>0$ and $\bw \in W^{1,1}_{loc}(\Omega, \R^n)$. We will say that $\bw^\eps$ converges to $\bw$ (weakly) in $W^{1,1}_{loc}(\Omega, \R^n)$ if $\bw^\eps$ converges to $\bw$ (weakly) in $W^{1,1}(\Omega', \R^n)$ for every open  $\Omega'$ such that $\Omega' \subset\subset \Omega$. 
\end{comment} 
We have following lemmata. 
\begin{lemma}\label{goodseq} 
Let $\widetilde \Psi \colon \R \to [0, +\infty[$ be an even, convex function and let $\bw \in \W$. There exists a family of maps $(\bw^\eps)_{\eps \in ]0, \eps_0]}$ such that $\bw^\eps \in W^{1,\infty}(\Omega^\eps)$, $\bw^\eps \to \bw$ in $\W$ as $\eps \to 0^+$,  
\begin{equation}\label{recseq} 
\lim_{\eps \to 0^+} \int_{\Omega^\eps} \widetilde \Psi (\Phi_\eps(\nabla \bw^\eps)) = \int_{\Omega} \widetilde \Psi (\Phi(\nabla \bw))
\end{equation}
and
\begin{equation}\label{recseq2} 
\lim_{\eps \to 0^+} \int_{\Omega^\eps\setminus \Omega} (\bw^\eps)^2 = 0. 
\end{equation}
\end{lemma}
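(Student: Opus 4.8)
The plan is to construct $\bw^\eps$ by first dilating $\bw$ to gain room over the enlarged domain $\Omega^\eps$, and then mollifying at an independent scale. Fix an interior point $x_0$ of $\Omega$, write $D_t(x) = x_0 + t(x-x_0)$ for $t>1$, and set $\bw_t = \bw \circ D_{1/t}$, so that $\nabla \bw_t = \tfrac1t(\nabla \bw)\circ D_{1/t}$. Since $\Omega$ is convex with $x_0$ interior, one has $\overline\Omega \subset\subset D_t(\Omega)$, so $\bw_t$ is defined on a neighbourhood of $\overline\Omega$, and $\bw_t \to \bw$ in $\W$ as $t\to1^+$ by continuity of dilations in $L^2$ and $W^{1,1}$. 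Using $\Omega^\eps \to \Omega$ in Hausdorff distance I would pick, for each $\eps$, parameters $t=t(\eps)\to 1^+$ and $\delta=\delta(\eps)\to 0^+$ and define $\bw^\eps = \varphi_\delta * \bw_t$, requiring that the $\delta$-neighbourhood of $\Omega^\eps$ lie in $D_t(\Omega)$ (so that $\bw^\eps$ is a well-defined smooth, hence $W^{1,\infty}(\Omega^\eps)$, function), together with two smallness conditions fixed below. As $t\to 1$ and $\delta\to 0$ this gives $\bw^\eps \to \bw$ in $\W$.

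For \eqref{recseq2} I would argue by Jensen and change of variables: since $s\mapsto s^2$ is convex, $(\bw^\eps)^2 \leq \varphi_\delta * (\bw_t^2)$ pointwise, hence
\[
\int_{\Omega^\eps \setminus \Omega} (\bw^\eps)^2 \leq \int_{(\Omega^\eps\setminus\Omega)+\supp\varphi_\delta} \bw_t^2 = t^m \int_{D_{1/t}\left((\Omega^\eps\setminus\Omega)+\supp\varphi_\delta\right)} \bw^2,
\]
where the last set is a subset of $\Omega$ (by the well-definedness condition $\Omega^\eps + \supp\varphi_\delta \subset D_t(\Omega)$) of measure tending to $0$; absolute continuity of the integral of $\bw^2 \in L^1(\Omega)$ then gives the claim. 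The lower bound in \eqref{recseq} is the easy half: by Jensen $\varphi_\eps * \Phi \geq \Phi$, so $\Phi_\eps \geq \Phi$, and since $\widetilde\Psi$ is nondecreasing on $[0,+\infty[$ we get $\widetilde\Psi(\Phi_\eps(\nabla \bw^\eps)) \geq \widetilde\Psi(\Phi(\nabla \bw^\eps))$; restricting to $\Omega\subset\Omega^\eps$, using $\nabla \bw^\eps \to \nabla \bw$ in $L^1(\Omega)$ and passing to an a.e.\ convergent subsequence, Fatou's lemma yields $\liminf_\eps \int_{\Omega^\eps}\widetilde\Psi(\Phi_\eps(\nabla\bw^\eps)) \geq \int_\Omega \widetilde\Psi(\Phi(\nabla \bw))$.

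The main work, and the main obstacle, is the matching upper bound, since $\widetilde\Psi$ may grow arbitrarily fast and therefore neither the quadratic term $\tfrac\eps2|\cdot|^2$ in $\Phi_\eps$ nor the enlargement $\Omega^\eps \supsetneq \Omega$ can be handled by crude domination. I would proceed in three steps. First, $\Phi$ is globally Lipschitz by Proposition \ref{convexprop}, so $\varphi_\eps * \Phi \leq \Phi + L\eps$, while Young's inequality gives $|\nabla \bw^\eps| \leq C\delta^{-m}$; hence $\Phi_\eps(\nabla \bw^\eps) \leq \Phi(\nabla \bw^\eps) + \rho_\eps$ with $\rho_\eps = L\eps + \tfrac{C^2}{2}\eps\delta^{-2m}$, and the second smallness condition $\eps\delta^{-2m}\to 0$ forces $\rho_\eps \to 0$. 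Applying the nondecreasing $\widetilde\Psi$, then Jensen to the convex map $\xi \mapsto \widetilde\Psi(\Phi(\xi)+\rho_\eps)$ and the change of variables $D_t$, gives
\[
\int_{\Omega^\eps}\widetilde\Psi(\Phi_\eps(\nabla \bw^\eps)) \leq \int_{D_t(\Omega)}\widetilde\Psi(\Phi(\nabla \bw_t)+\rho_\eps) = t^m\int_\Omega \widetilde\Psi\!\left(\Phi(\tfrac1t\nabla \bw)+\rho_\eps\right).
\]
Second, convexity of $\Phi$ yields $\Phi(\tfrac1t\nabla \bw) \leq \tfrac1t\Phi(\nabla \bw) + (1-\tfrac1t)\Phi(0)$, and writing the argument as the convex combination $\tfrac1t\,\Phi(\nabla \bw) + (1-\tfrac1t)\big(\Phi(0)+\tfrac{\rho_\eps}{1-1/t}\big)$ and using convexity of $\widetilde\Psi$ separates off a constant:
\[
t^m\int_\Omega \widetilde\Psi\!\left(\Phi(\tfrac1t\nabla \bw)+\rho_\eps\right) \leq t^{m-1}\int_\Omega \widetilde\Psi(\Phi(\nabla \bw)) + t^m\left(1-\tfrac1t\right)|\Omega|\,\widetilde\Psi\!\left(\Phi(0)+\tfrac{\rho_\eps}{1-1/t}\right).
\]
Third, the smallness condition $\rho_\eps/(t-1)\to 0$ makes the last term vanish as $t\to 1$, so that $\limsup_\eps \int_{\Omega^\eps}\widetilde\Psi(\Phi_\eps(\nabla \bw^\eps)) \leq \int_\Omega \widetilde\Psi(\Phi(\nabla \bw))$. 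Combined with the lower bound this proves \eqref{recseq} when the right-hand side is finite, and when it is $+\infty$ the lower bound alone forces the limit to be $+\infty$. Finally, the three requirements on $(t(\eps),\delta(\eps))$ are mutually compatible (for instance $\delta \sim \eps^{1/(4m)}$ and $t-1 \sim \eps^{1/(5m)}$), which is the routine check I would record at the end.
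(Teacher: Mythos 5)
Your construction is correct and follows essentially the same route as the paper's proof: dilate about an interior point of the convex domain to gain room over $\Omega^\eps$, mollify, apply Jensen's inequality to the convex integrand, and use convexity of $\widetilde\Psi\circ\Phi$ to absorb the dilation factor $\tfrac1t$ at the cost of a vanishing constant term; the lower bound via $\Phi_\eps\geq\Phi$ and lower semicontinuity is also the same. The one genuine difference is how the discrepancy between $\Phi_\eps$ and $\Phi$ is handled: the paper exploits locally uniform convergence $\widetilde\Psi\circ\Phi_\eps\to\widetilde\Psi\circ\Phi$ on the ball of radius $\|\nabla\bw^\eps\|_{L^\infty}$ and encodes the admissibility of $\delta$ implicitly (conditions \eqref{pr1}--\eqref{pr2} and the infimum defining $\hat\delta(\eps)$), whereas you push the additive error $\rho_\eps = L\eps + \tfrac{C^2}{2}\eps\delta^{-2m}$ inside $\widetilde\Psi$ and absorb it into the convex combination alongside $\Phi(0)$; your version is more quantitative and also dispenses with a separate condition like \eqref{pr2}, since integrating the Jensen bound over $\Omega^\eps+B_\delta\subset D_t(\Omega)$ accounts for the excess region automatically. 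One loose end: your sample rates $\delta\sim\eps^{1/(4m)}$, $t-1\sim\eps^{1/(5m)}$ do not by themselves guarantee the containment $\Omega^\eps+B_\delta(0)\subset D_t(\Omega)$, because the Lemma only assumes $\Omega^\eps\to\Omega$ in Hausdorff distance with no rate; you must additionally require $t-1\geq c\left(d_H(\Omega^\eps,\Omega)+\delta\right)$, which is compatible with your other two smallness conditions and leaves the argument intact.
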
 

\begin{proof} 
    We denote $\Psi = \widetilde \Psi \circ \Phi$, $\Psi_\eps = \widetilde \Psi \circ \Phi_\eps$. Since $\widetilde \Psi$ is convex and non-decreasing on $[0, + \infty[$, both $\Psi$ and $\Psi_\eps$ are convex. Let $\widetilde \bw \in L^2(\Rm) \cap W^{1,1}(\Rm)$ be an extension of $\bw$. Fix $x_0 \in \Omega$. For $\mu > 0$ consider affine dilation $S_{\mu}\colon \R^m \to \R^m$ given by $S_\mu (x) = x_0 + (1+ \mu) (x - x_0)$. Since $\Omega$ is open and convex, we have $\Omega \subset\subset S_\mu(\Omega)$ for any $\mu >0$. We define $\widetilde \bw^\mu \in  L^2(\Rm) \cap W^{1,1}(\Rm)$ by $\widetilde \bw^\mu(x) = \widetilde \bw(S_{\mu}^{-1}(x))$. Due to convexity of $\Psi$, we have for $x \in S_{\mu}(\Omega)$ 
    \begin{equation*}
    \Psi(\nabla \widetilde \bw^{\mu} (x)) = \Psi\left(\tfrac{1}{1+\mu}\nabla \bw (S_{\mu}^{-1}(x))\right) \leq \tfrac{1}{1+\mu} \Psi\left(\nabla \bw (S_\mu^{-1}(x))\right) + \tfrac{\mu}{1+\mu} \Psi(0) 
    \end{equation*} 
    and so 
    \begin{multline} \label{wtildedeltaest}
    \int_{S_\mu(\Omega)} \Psi(\nabla \widetilde \bw^\mu(x))\dd x \leq \int_{S_\mu(\Omega)} \tfrac{1}{1+\mu} \Psi\left(\nabla \bw (S_\mu^{-1}(x))\right) + \tfrac{\mu}{1+\mu} \Psi(0)\dd x\\ 
    = (1+\mu)^{m-1} \int_{\Omega} \Psi(\nabla \bw) + \mu(1+\mu)^{m-1} |\Omega|\Psi(0). 
    \end{multline} 
     For any $\delta >0$, let $\hat \mu = \hat\mu(\delta)$ denote the smallest $\mu>0$ such that $\Omega + B_\delta(0) \subset S_\mu(\Omega)$. It is easy to check that $\hat \mu$ is well defined and $\hat \mu(\delta) \to 0$ as $\delta \to 0^+$. Pick any $\delta_0>0$. Note that we have locally uniform convergence $\Phi_\eps \to \Phi$, and hence also $\Psi_\eps \to \Psi$ as $\eps \to 0^+$. Recalling also that $|\Omega^\eps \setminus \Omega| \to 0^+$, we see that there exists an $\eps_0>0$ such that
 \begin{equation}\label{pr1}
 |{\Omega^\eps}|\cdot \max\left\{\left|\Psi_\eps(\xi) - \Psi(\xi)\right|\colon |\xi| \leq \|\nabla \varphi_{\delta} *\widetilde \bw^{\hat\mu(\delta)}\|_{L^\infty({\Omega^\eps}, \Rm)} \right\} \leq \tfrac{1}{2} \delta
 \end{equation} 
 and 
  \begin{equation}\label{pr2}
 \int_{\Omega^\eps\setminus\Omega} \Psi(\nabla \varphi_{\delta} *\widetilde \bw^{\hat\mu(\delta)}) \leq \tfrac{1}{2} \delta
 \end{equation}
 are satisfied for $\delta=\delta_0$ and all $\eps \in]0, \eps_0]$. We define $ \hat \delta\colon ]0,\eps_0] \to [0, \delta_0]$ by the following formula,
 \[\hat \delta(\eps)= \inf\{\delta \in]0,\delta_0]: (\ref{pr1}, \ref{pr2}) \text{ hold}\},\]
 and set  for $\eps \in]0,\eps_0]$,
 \[\bw^\eps = \left\{ \begin{array}{l} \widetilde \bw |_{\Omega^\eps} \quad \text{if } \hat\delta(\eps) = 0, \\
 \varphi_{\hat\delta(\eps)} * \widetilde \bw^{\hat\mu(\hat\delta(\eps))}\big|_{\Omega^\eps} \quad \text{otherwise}. \end{array} \right.\] 
One can check that if $\hat \delta(\eps) \neq 0$ then (\ref{pr1}, \ref{pr2}) hold with $\delta = \hat \delta(\eps)$, while if $\hat \delta(\eps) = 0$ then  
 \[ \max\left\{\left|\Psi_\eps(A) - \Psi(A)\right|\colon |A| \leq \ess\sup_{\Omega^\eps} |\nabla \widetilde \bw| \right\} = \int_{\Omega^\eps\setminus\Omega} \Psi(\nabla \widetilde \bw) =0.\]
Again, by locally uniform convergence of $\Psi_\eps$ and convergence of $|\Omega^\eps\setminus \Omega|$ to zero, $\hat\delta(\eps)$ tends to $0$ as $\eps\to 0^+$. In particular, $\bw^\eps \to \bw$ in $W^{1,1}(\Omega)$ and $L^2(\Omega)$ as $\eps\to 0^+$. 
 
 By the definitions of $\bw^\eps$ and $\hat\delta$, we have 
 \begin{equation}\label{sbound}\int_{\Omega^\eps} \Psi_\eps(\nabla \bw^\eps) = \int_{\Omega^\eps} \left(\Psi_\eps(\nabla \bw^\eps) - \Psi(\nabla \bw^\eps)\right) + \int_{\Omega^\eps} \Psi(\nabla \bw^\eps) \leq \int_\Omega \Psi(\nabla \bw^\eps) + \hat{\delta}(\eps). 
 \end{equation} 
 Due to convexity of $\Psi$ and $\int_{\R^m} \varphi = 1$, 
  \begin{equation} \label{convPhiest} 
 \int_{\Omega} \Psi(\nabla \bw^\eps) \leq\int_{\Omega} \varphi_{\hat \delta(\eps)} * \left(\Psi\left(\nabla \widetilde \bw^{\hat \mu(\hat \delta(\eps))} \right)\right) \leq \int_{\Omega+B_\delta(0)} \Psi\left(\nabla \widetilde \bw^{\hat \mu(\hat \delta(\eps))} \right)  
 \end{equation} 
provided that $\hat \delta(\eps) \neq 0$. By (\ref{sbound}, \ref{convPhiest}, \ref{wtildedeltaest}), 
 \begin{equation} 
 \limsup_{\eps \to 0^+} \int_{\Omega^\eps} \Psi_\eps(\nabla \bw^\eps) \leq \int_\Omega \Psi(\nabla \bw).
 \end{equation} 
 On the other hand, since $\widetilde \Psi$ is non-decreasing on $[0, +\infty[$ and $\Phi_\eps \geq \Phi$, we have $\Psi_\eps \geq \Psi$. Hence, 
  \begin{equation}\label{lowerest}  \liminf_{\eps \to 0^+}  \int_{\Omega^\eps} \Psi_\eps(\nabla \bw^\eps) \geq \liminf_{\eps \to 0^+}  \int_{\Omega} \Psi(\nabla \bw^\eps) \geq \int_{\Omega} \Psi(\nabla \bw)
  \end{equation} 
   which concludes the proof of \eqref{recseq}. It remains to check \eqref{recseq2}: 
   \begin{multline}\label{recseq2est}\int_{\Omega^\eps\setminus\Omega} (\bw^\eps)^2 \leq \int_{\Omega^\eps \setminus \Omega} \varphi_{\hat\delta(\eps)} * \left(\widetilde \bw^{\hat \mu(\hat \delta(\eps))}\right)^2 \leq \int_{\Omega^\eps \setminus \Omega\, +\, B_{\hat{\delta}(\eps)}(0)} \left(\widetilde \bw^{\hat \mu(\hat \delta(\eps))}\right)^2\\ = \left(1 + \hat \mu(\hat \delta(\eps))\right)^m \int_{S_{\hat \mu(\hat \delta(\eps))}^{-1}\left(\Omega^\eps \setminus \Omega + B_{\hat{\delta}(\eps)}(0)\right)} \widetilde \bw^2. 
   \end{multline} 
   Since $\left|S_{\hat \mu(\hat \delta(\eps))}^{-1}\left(\Omega^\eps \setminus \Omega + B_{\hat{\delta}(\eps)}(0)\right)\right| \to 0$ as $\eps \to 0^+$, the r.\,h.\,s\;of \eqref{recseq2est} converges to $0$. 
 \end{proof} 

\begin{lemma} \label{Gama-zb}
	Let $(\bff^\eps)_{\eps \in ]0, \eps_0]}$ be a family of maps such that $\bff^\eps \in L^2(\Omega^\eps)$ for $\eps \in ]0, \eps_0]$,  $\bff^\eps \to \bff$ in $L^2(\Omega)$ and $\int_{\Omega^\eps\setminus \Omega} (\bff^\eps)^2 \to 0$ as $\eps \to 0^+$. Then $E^{\lambda, \eps}_{\bff^\eps}$ $\Gamma$-converges to $E^\lambda_{\bff}$ as $\eps \to 0^+$ with respect to the weak convergence in $\W$.
\end{lemma}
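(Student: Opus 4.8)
The plan is to verify the two defining inequalities of $\Gamma$-convergence separately: the $\Gamma$-$\liminf$ inequality along every family converging weakly in $\W$, and the existence of one recovery family realizing the $\Gamma$-$\limsup$ inequality. Two elementary observations will be used repeatedly. First, $\Phi_\eps \geq \Phi$: indeed $\varphi_\eps * \Phi \geq \Phi$ by Jensen's inequality (since $\varphi_\eps$ is a symmetric probability density, so has mean zero, and $\Phi$ is convex), while $\tfrac{\eps}{2}|\cdot|^2 \geq 0$. Second, $\Omega \subset \Omega^\eps$ and all integrands are nonnegative, so the energy over $\Omega^\eps$ dominates the corresponding energy over $\Omega$, with $\Phi$ in place of $\Phi_\eps$.

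For the $\Gamma$-$\liminf$ inequality, I would take any family $\bw^\eps \in W^{1,2}(\Omega^\eps)$ whose restrictions to $\Omega$ converge weakly in $\W$ to some $\bw$. The two observations above give the pointwise-in-$\eps$ bound
\[ E^{\lambda, \eps}_{\bff^\eps}(\bw^\eps) \geq \lambda \int_\Omega \Phi(\nabla \bw^\eps) + \tfrac{1}{2}\int_\Omega |\bw^\eps - \bff^\eps|^2. \]
By definition of weak convergence in $\W$, we have $\nabla \bw^\eps \rightharpoonup \nabla \bw$ weakly in $L^1(\Omega, \Rm)$ and $\bw^\eps \rightharpoonup \bw$ weakly in $L^2(\Omega)$, while $\bff^\eps \to \bff$ strongly in $L^2(\Omega)$ by hypothesis, so $\bw^\eps - \bff^\eps \rightharpoonup \bw - \bff$ weakly in $L^2(\Omega)$. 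Invoking the sequential weak lower semicontinuity of the convex integral functional $\bw \mapsto \int_\Omega \Phi(\nabla \bw)$ on $W^{1,1}(\Omega)$ and of the $L^2$-norm, and using $\liminf(a_\eps + b_\eps) \geq \liminf a_\eps + \liminf b_\eps$, I would conclude $\liminf_{\eps \to 0^+} E^{\lambda, \eps}_{\bff^\eps}(\bw^\eps) \geq E^\lambda_{\bff}(\bw)$.

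For the $\Gamma$-$\limsup$ inequality, the recovery family is supplied directly by Lemma \ref{goodseq} applied with $\widetilde \Psi = |\cdot|$: it yields $\bw^\eps \in W^{1,\infty}(\Omega^\eps) \subset W^{1,2}(\Omega^\eps)$ with $\bw^\eps \to \bw$ in $\W$ (hence weakly), with $\int_{\Omega^\eps} \Phi_\eps(\nabla \bw^\eps) \to \int_\Omega \Phi(\nabla \bw)$ by \eqref{recseq}, and with $\int_{\Omega^\eps \setminus \Omega} (\bw^\eps)^2 \to 0$ by \eqref{recseq2}. For the fidelity term I would split the integral over $\Omega^\eps$ into its parts over $\Omega$ and over $\Omega^\eps \setminus \Omega$. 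The part over $\Omega$ converges to $\tfrac12 \int_\Omega |\bw - \bff|^2$ by strong $L^2$ convergence of both $\bw^\eps$ and $\bff^\eps$ on $\Omega$; the part over $\Omega^\eps \setminus \Omega$ is bounded by $\int_{\Omega^\eps \setminus \Omega}(\bw^\eps)^2 + \int_{\Omega^\eps \setminus \Omega}(\bff^\eps)^2$, which tends to $0$ by \eqref{recseq2} and the hypothesis on $\bff^\eps$. Altogether $E^{\lambda, \eps}_{\bff^\eps}(\bw^\eps) \to E^\lambda_{\bff}(\bw)$, which is even stronger than the required $\limsup$ inequality.

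Most of the conceptual effort — building a recovery family that simultaneously matches the linear-growth energy and controls the mass escaping into the collar $\Omega^\eps \setminus \Omega$ — has already been absorbed into Lemma \ref{goodseq}, so here the only genuinely delicate point is the weak lower semicontinuity of $\bw \mapsto \int_\Omega \Phi(\nabla \bw)$ in the non-reflexive space $W^{1,1}(\Omega)$. I expect this to be the main (though standard) obstacle: it follows from convexity together with strong lower semicontinuity of the functional, via Mazur's lemma, extracting from strongly $L^1$-convergent convex combinations of the gradients an a.e.-convergent subsequence and applying Fatou's lemma, which is legitimate since $\Phi \geq 0$.
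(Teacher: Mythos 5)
Your proposal is correct and follows essentially the same route as the paper: the $\liminf$ inequality via $\Phi_\eps \geq \Phi$, restriction to $\Omega$, and weak lower semicontinuity of the convex gradient integral and the $L^2$ fidelity term; the $\limsup$ inequality via the recovery family from Lemma \ref{goodseq} with $\widetilde\Psi = |\cdot|$ and the splitting of the fidelity integral over $\Omega$ and $\Omega^\eps\setminus\Omega$. The extra justifications you supply (Jensen for $\varphi_\eps*\Phi\geq\Phi$, Mazur plus Fatou for weak $L^1$ lower semicontinuity) are sound and merely make explicit what the paper cites as standard.
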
 
\begin{proof} 
 Let $\bw \in \W$. First, take any family $(\bw^\eps)_{\eps \in ]0, \eps_0]}$, $\bw^\eps \in W^{1,2}(\Omega^\eps)$, such that $\bw^\eps \rightharpoonup \bw$ in $\W$ when $\eps \to 0^+$. Since $\Phi$ is convex, we have $\Phi_\eps \geq \Phi$. Hence, due to the weak lower semicontinuity of convex integrals,
 \[ \liminf_{\eps \to 0^+} E^{\lambda, \eps}_{\bff^\eps}(\bw^\eps) \geq \liminf_{\eps \to 0^+} \lambda \int_{\Omega} \Phi(\nabla \bw^\eps) + \frac{1}{2} \int_{\Omega} |\bw^\eps - \bff^\eps|^2 \geq \lambda \int_{\Omega} \Phi(\nabla \bw) + \frac{1}{2} \int_{\Omega} |\bw - \bff|^2.\] 
Thus we have proved the lower bound inequality in the definition of $\Gamma$-convergence. 

On the other hand, given any $\bw \in \W$, let $(\bw^\eps)_{\eps \in ]0, \eps_0]}$ be the family provided by Lemma~\ref{goodseq} given $\widetilde \Psi = |\cdot|$. Then $\bw^\eps \to \bw$ in $W^{1,1}(\Omega)$ and $\int_{\Omega^\eps} \Phi_\eps(\bw^\eps) \to \int_{\Omega} \Phi(\bw)$. Moreover, since $\bff^\eps \to \bff$, $\bw^\eps \to \bw$ in $L^2(\Omega)$ and $\int_{\Omega^\eps\setminus \Omega} (\bff^\eps)^2 \to 0$, $\int_{\Omega^\eps\setminus \Omega} (\bw^\eps)^2 \to 0$ as $\eps \to 0^+$,
\[\int_{\Omega^\eps}|\bw^\eps - \bff^\eps|^2 = \int_{\Omega}|\bw^\eps - \bff^\eps|^2 + \int_{\Omega^\eps\setminus \Omega}|\bw^\eps - \bff^\eps|^2 \to \int_{\Omega}|\bw - \bff|^2 \]
as $\eps \to 0^+$. Thus, $(\bw^\eps)_{\eps \in ]0, \eps_0]}$ is a (generalized) recovery sequence for $\bw$.  
 \end{proof} 

\section{A superlinear estimate} 	
Let us recall a result in linear algebra, which permits us to generalize the results of \cite{NakayasuRybka} to higher dimensions.
\begin{lemma} \label{gal} Let $A$, $B$, $C$ be $m\times m$ symmetric matrices. Suppose that $A$ and $B$ are positive semidefinite. Then $\tr ACBC \geq 0$. \end{lemma}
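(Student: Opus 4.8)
The plan is to reduce the claim to the nonnegativity of a squared Frobenius norm by exploiting symmetric square roots and the cyclic invariance of the trace. Since $A$ and $B$ are symmetric positive semidefinite, they admit symmetric positive semidefinite square roots; write $A = PP$ and $B = QQ$ with $P = A^{1/2}$, $Q = B^{1/2}$ both symmetric. Substituting into the expression of interest gives
\[
\tr(ACBC) = \tr(PPCQQC).
\]

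Next I would rearrange using the cyclic property $\tr(XY) = \tr(YX)$. Moving the leading factor $P$ to the end yields
\[
\tr(PPCQQC) = \tr(PCQQCP) = \tr\big((PCQ)(QCP)\big).
\]
The key observation is that, because $P$, $Q$, and $C$ are all symmetric, the second factor is the transpose of the first: $(PCQ)^{T} = Q^{T}C^{T}P^{T} = QCP$. Hence, setting $M = PCQ$, we obtain
\[
\tr(ACBC) = \tr\big(M M^{T}\big) = \sum_{i,j} M_{ij}^{2} = |M|^{2} \geq 0,
\]
which is exactly the desired inequality.

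This argument is essentially a one-line computation once the right device is identified, so I do not anticipate a genuine obstacle. The only point requiring a modicum of care is the appeal to the \emph{symmetric} square root: it is precisely the symmetry of $P$ and $Q$ (together with that of $C$) that allows the identification $(PCQ)^{T} = QCP$ and thus the recognition of $\tr(ACBC)$ as $\tr(MM^{T})$. I would note in passing that neither $A$ nor $B$ need be invertible, since the construction of the positive semidefinite square root is available for any positive semidefinite symmetric matrix, and that the result is sharp in the sense that dropping positive semidefiniteness of either $A$ or $B$ allows $\tr(ACBC)$ to become negative.
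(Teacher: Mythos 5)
Your proof is correct. It takes a mildly different route from the paper: the paper writes the spectral decompositions $A = \sum_k \lambda_k \be_k \otimes \be_k$, $B = \sum_l \mu_l \bff_l \otimes \bff_l$ with $\lambda_k, \mu_l \geq 0$ and computes directly, using the symmetry of $C$,
\[
\tr ACBC = \sum_{k,l} \lambda_k\, \mu_l\, (\be_k \cdot C \bff_l)^2 \geq 0,
\]
whereas you package the same positivity via the symmetric square roots $P = A^{1/2}$, $Q = B^{1/2}$ and the identity $\tr ACBC = \tr\bigl(M M^{T}\bigr)$ with $M = PCQ$. The two arguments are close cousins: expanding your Frobenius norm $|PCQ|^2$ in the eigenbases of $A$ and $B$ recovers exactly the paper's sum of squares. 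What each buys: your version is coordinate-free and identifies the quantity as a squared norm in one stroke, so no indices ever appear; the paper's version avoids invoking the existence of the positive semidefinite square root (which is itself an application of the spectral theorem it uses anyway) and makes explicit when equality holds, namely when $\be_k \cdot C \bff_l = 0$ for all pairs with $\lambda_k \mu_l > 0$. Both arguments use the symmetry of $C$ at the crucial step --- in the paper to identify $(\be_k \cdot C\bff_l)(\bff_l \cdot C\be_k)$ as a square, in yours to get $(PCQ)^T = QCP$ --- and this is exactly the hypothesis that fails in the vectorial setting discussed in Remark \ref{uwaga}.
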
 
\begin{proof} 
	   In the proof we suppress the summation convention. As $A, B \geq 0$, there exist $\lambda_k, \mu_k \in [0,+\infty[$ and $\be_k, \bff_k \in \Rm$, $k=1,\ldots, m$ with 
		   \[ A = \sum_{k=1}^m \lambda_k \be_k \otimes \be_k, \qquad B = \sum_{k=1}^m \mu_k \bff_k \otimes \bff_k. \]  
		   Thus, appealing to symmetry of $C$, we record,
		   \[\tr ACBC = \sum_{k,l=1}^m \lambda_k\; \mu_l\; (\be_k\! \cdot C \bff_l)\; (\bff_l\! \cdot C \be_k) = \sum_{k,l=1}^m \lambda_k\; \mu_l\; (\be_k \cdot C \bff_l)^2 \geq 0. %\eqno\qed
		   \]
		   \end{proof} 
Next result is a generalization of \cite[Theorem 3.1]{NakayasuRybka}. Its proof is based on Lemma \ref{gal} and a boundary estimate that relies on convexity of the domain.
\begin{lemma} \label{mainlem} 
	Suppose that $\bg \in W^{1,\infty}(\Omega^\eps)$. Let $\widetilde \Psi\colon \R \to [0,+\infty[$ be an even, convex function and let $\bu^\eps$ be the minimizer of $E^{\lambda, \eps}_{\bg}$. Denote $\Psi_\eps = \widetilde \Psi \circ \Phi_\eps$. Then,
	\begin{equation}\label{superest} 
	\int_{\Omega^\eps} \Psi_\eps(\nabla \bu^\eps) \leq \int_{\Omega^\eps} \Psi_\eps(\nabla \bg). 
	\end{equation} 
\end{lemma}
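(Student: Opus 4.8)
The plan is to run an energy estimate on the Euler--Lagrange system of Proposition~\ref{classic}: I differentiate it, test the differentiated equation against $D\Psi_\eps(\nabla\bu^\eps)$, and combine the outcome with the pointwise convexity inequality for $\Psi_\eps$, so that \eqref{superest} reduces to a sign condition on one bulk integrand and one boundary integrand. Throughout I abbreviate $\sigma=D\Phi_\eps(\nabla\bu^\eps)$, $a=D^2\Phi_\eps(\nabla\bu^\eps)$ (symmetric and positive semidefinite, since $\Phi_\eps$ is convex), $H=\nabla^2\bu^\eps$ (symmetric), $\psi=\widetilde\Psi'(\Phi_\eps(\nabla\bu^\eps))$ and $\psi''=\widetilde\Psi''(\Phi_\eps(\nabla\bu^\eps))$; because $\widetilde\Psi$ is even and convex we have $\psi,\psi''\geq0$, and $\Psi_\eps=\widetilde\Psi\circ\Phi_\eps$ is convex. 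I write $\norm$ for the outer unit normal $\norm^{\Omega^\eps}$ to $\partial\Omega^\eps$, and recall that $\bu^\eps\in W^{2,2}(\Omega^\eps)$, $\sigma\in W^{1,2}(\Omega^\eps,\Rm)$ by Proposition~\ref{classic}. The subgradient inequality for $\Psi_\eps$ at $\nabla\bu^\eps$ evaluated at $\nabla\bg$, namely $\Psi_\eps(\nabla\bg)\geq\Psi_\eps(\nabla\bu^\eps)+D\Psi_\eps(\nabla\bu^\eps)\cdot(\nabla\bg-\nabla\bu^\eps)$, reduces \eqref{superest} to proving
\[ I:=\int_{\Omega^\eps}D\Psi_\eps(\nabla\bu^\eps)\cdot(\nabla\bu^\eps-\nabla\bg)\leq0, \]
since then $\int_{\Omega^\eps}\Psi_\eps(\nabla\bu^\eps)-\int_{\Omega^\eps}\Psi_\eps(\nabla\bg)\leq I\leq0$. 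Note $D\Psi_\eps(\nabla\bu^\eps)=\psi\,\sigma$, so $I$ is exactly what the energy method produces.

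Next I differentiate \eqref{ELeqeps}. Setting $v_k=\partial_k\bu^\eps$ and using $\partial_k\sigma_i=a_{il}\partial_l v_k$, the system becomes, for each $k$, $v_k-\partial_k\bg=\lambda\,\partial_i(a_{il}\partial_l v_k)$. I test the $k$-th equation with $\phi_k:=\psi\,\sigma_k$, sum over $k$, and integrate by parts, keeping the boundary contribution:
\[ I=\int_{\Omega^\eps}(v_k-\partial_k\bg)\phi_k=-\lambda\int_{\Omega^\eps}a_{il}\,\partial_l v_k\,\partial_i\phi_k+\lambda\int_{\partial\Omega^\eps}a_{il}\,\partial_l v_k\,\norm_i\,\phi_k. \]
For the bulk term I expand $\partial_i\phi_k=\psi''(\sigma_m H_{im})\sigma_k+\psi\,a_{km}H_{im}$, so the integrand splits as $\psi''\,(H\sigma)^\top a\,(H\sigma)+\psi\,\tr(aHaH)$. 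The first summand is nonnegative because $a\geq0$; the second is nonnegative by Lemma~\ref{gal} applied with $A=B=a$ and $C=H$. Hence the bulk contribution is $\leq0$.

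The boundary term is where convexity of $\Omega^\eps$ enters, and I expect it to be the main obstacle. Using $a_{il}\partial_l v_k=\partial_k\sigma_i$ together with the Neumann condition \eqref{ELbceps}, its integrand equals $\psi\,\sigma_k\,\norm_i\,\partial_k\sigma_i=\psi\,\norm\cdot(\sigma\cdot\nabla)\sigma$. Since \eqref{ELbceps} makes $\sigma$ tangential along $\partial\Omega^\eps$, differentiating the identity $\sigma\cdot\norm=0$ in the tangential direction $\sigma$ gives $\norm\cdot(\sigma\cdot\nabla)\sigma=-\sigma_i\sigma_k\,\partial_k\norm_i$, i.e. minus the second fundamental form of $\partial\Omega^\eps$ (with respect to the outer normal) evaluated on $\sigma$. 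Smoothness and convexity of $\Omega^\eps$ make this form positive semidefinite, so the integrand is $\leq0$ and the boundary term is $\leq0$. Combined with the bulk estimate this gives $I\leq0$, hence \eqref{superest}.

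The difficulties I anticipate are regularity-theoretic rather than conceptual. As $\bu^\eps$ is only $W^{2,2}$ and $\widetilde\Psi$ merely convex, $\phi_k=\psi\sigma_k$ is only a $W^{1,2}$ test function and $\psi''$ is a priori only a nonnegative measure; I would justify the differentiation of \eqref{ELeqeps} and the integration by parts via difference quotients, exactly as in the $W^{2,2}$ argument behind Proposition~\ref{classic}, and first prove the estimate for smooth nondecreasing approximations $\widetilde\Psi_n\uparrow\widetilde\Psi$ before passing to the limit. The genuinely delicate step is the boundary term, which formally involves traces of second derivatives of $\bu^\eps$ not controlled by $W^{2,2}$ alone; making the tangential-differentiation identity and the sign of the second fundamental form rigorous — by flattening the boundary and arguing on the smooth domains $\Omega^\eps$ — is the crux, and it is precisely here that convexity of $\Omega^\eps$ is indispensable.
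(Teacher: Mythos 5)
Your proposal is correct and follows essentially the same route as the paper: reduce \eqref{superest} to $\int_{\Omega^\eps}D\Psi_\eps(\nabla\bu^\eps)\cdot(\nabla\bu^\eps-\nabla\bg)\leq 0$ by convexity, differentiate the Euler--Lagrange system and test with $D\Psi_\eps(\nabla\bu^\eps)$, obtain the sign of the bulk term from Lemma~\ref{gal} (the paper applies it once with $B=D^2\Psi_{\eps,k}(\nabla\bu^\eps)$ rather than splitting off the $\widetilde\Psi''\,\sigma\otimes\sigma$ piece, but this is cosmetic), and identify the boundary integrand as $-\widetilde\Psi'(\Phi_\eps(\nabla\bu^\eps))\,\mathcal A(D\Phi_\eps(\nabla\bu^\eps),D\Phi_\eps(\nabla\bu^\eps))\leq 0$ by tangential differentiation of the Neumann condition and convexity of $\Omega^\eps$. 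The two technical points you flag are resolved in the paper exactly as you anticipate: $\widetilde\Psi$ is replaced by mollified truncations $\widetilde\Psi_k$ of at most linear growth (with monotone convergence at the end), and the boundary pairing is justified by approximating $D\Phi_\eps(\nabla\bu^\eps)$ in $W^{1,2}$ by smooth tangential fields and using an extension of the normal that is constant along fibers of a tubular neighborhood.
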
 
\begin{proof} We approximate $\widetilde \Psi$ with a sequence of smooth, even, convex functions of at most linear growth in the following way. For $k \in \N$, we define a convex $T_k \widetilde \Psi \colon \R \to [0, +\infty[$ by
 \[ T_k \widetilde \Psi(p) = \widetilde \Psi(p) \text{ if } |\widetilde \Psi'(p)| \leq k, \quad |(T_k \widetilde \Psi)'(p)| = k\text{ if } |\widetilde \Psi'(p)| > k \]
for a.\,e.\;$p \in \R$ where $\widetilde \Psi'(p)$ exists. Next, for $k \in \N$ we set
\[\widetilde{\Psi}_k = \varphi_{\frac{1}{k}} * (T_k \widetilde \Psi), \]
where $(\varphi_{\delta})_{\delta>0}$ is a standard approximate identity on the line, and
\[\Psi_{\eps,k} = \widetilde{\Psi}_k \circ \Phi_\eps.\]
Clearly, $\Psi_{\eps,k}$ is a smooth, even, convex function for any $\eps >0$, $k \in \N$ and 
$\Psi_{\eps,k} \to \Psi_\eps$ locally uniformly when $k \to +\infty$. We calculate
\begin{equation}\label{truncderiv1} 
D\Psi_{\eps,k}(\nabla \bu^\eps) = \widetilde{\Psi}_k'(\Phi_{\eps}(\nabla \bu^\eps)) D\Phi_{\eps}(\nabla \bu^\eps), 
\end{equation} 
\begin{equation}\label{truncderiv2} 
D^2\Psi_{\eps,k}(\nabla \bu^\eps) = \widetilde{\Psi}_k''(\Phi_{\eps}(\nabla \bu^\eps)) D\Phi_{\eps}(\nabla \bu^\eps) \otimes D\Phi_{\eps}(\nabla \bu^\eps)  + \widetilde{\Psi}_k'(\Phi_{\eps}(\nabla \bu^\eps)) D^2\Phi_{\eps}(\nabla \bu^\eps). 
\end{equation}
Appealing to Proposition \ref{convexprop} we get $D^2\Psi_{\eps,k}(\nabla \bu^\eps) \in L^\infty(\Omega^\eps, \R^{m^2})$, and by Proposition \ref{classic} 
\[\nabla(D\Psi_{\eps,k}(\nabla \bu^\eps)) = D^2\Psi_{\eps,k}(\nabla \bu^\eps)\, \nabla^2 \bu^\eps \in L^2(\Omega^\eps, \R^{m^2}). \]   
It also follows from Proposition \ref{classic} that $\div(D \Phi_\eps(\nabla \bu^\eps)) = \frac{1}{\lambda}(\bu^\eps - \bg) \in W^{1,2}(\Omega^\eps)$. Thus,
\begin{equation} \label{divintegr} 
\nabla \div(D \Phi_\eps(\nabla \bu^\eps))  = \left((D \Phi_\eps(\nabla \bu^\eps))^\alpha_{i,x_i x_k}\right) =  \div( \nabla D \Phi_\eps(\nabla \bu^\eps)) \in L^2(\Omega^\eps, \Rm).
\end{equation}
Hence, we can calculate \cite[Lemma 1]{FujiwaraMorimoto},  
\begin{multline} \label{maincomp} 
\int_{\Omega^\eps} \nabla \div(D \Phi_\eps(\nabla \bu^\eps)) \cdot D \Psi_{\eps, k}(\nabla \bu^\eps) + \int_{\Omega^\eps} \nabla (D \Phi_\eps(\nabla \bu^\eps)) \cddot \nabla (D \Psi_{\eps, k}(\nabla \bu^\eps)) \\ = \langle \nabla D \Phi_\eps(\nabla \bu^\eps) \cdot \norm^{\Omega^\eps}, D \Psi_{\eps, k}(\nabla \bu^\eps)\rangle_{H^{-\frac{1}{2}}(\partial {\Omega^\eps}, \Rm), H^{\frac{1}{2}}(\partial {\Omega^\eps}, \Rm)}. 
\end{multline} 
We have 
\begin{multline} \label{nowe}
\nabla (D \Phi_\eps(\nabla \bu^\eps)) \cddot \nabla (D \Psi_{\eps, k}(\nabla \bu^\eps)) = (D^2 \Phi_\eps(\nabla \bu^\eps) \cdot \nabla^2 \bu^\eps) \cddot (D^2 \Psi_{\eps, k}(\nabla \bu^\eps) \cdot \nabla^2 \bu^\eps)
\\=  D^2 \Phi_\eps(\nabla \bu^\eps)_{ij}\, (\bu^\eps)_{x_j x_k} \, D^2 \Psi_{\eps, k}(\nabla \bu^\eps)_{kl}\,(\bu^\eps)_{x_l x_i}. 
\end{multline} 
This expression is of form $\tr ACBC$ with $A,B,C$ satisfying conditions of Lemma \ref{gal}.
Therefore, 
\begin{equation} \label{ellipticineq} 
\int_{\Omega^\eps} \nabla (D \Phi_\eps(\nabla \bu^\eps)) \cddot \nabla (D \Psi_{\eps, k}(\nabla \bu^\eps)) \geq 0. 
\end{equation} 

\begin{remark}\label{uwaga}
At this point we would like to explain why our approach fails in the vectorial setting. In that case, instead of (\ref{nowe}), we would have to deal with
\begin{multline*} 
\nabla (D \Phi_\eps(\nabla \bu^\eps)) \cdddot \nabla (D \Psi_{\eps, k}(\nabla \bu^\eps)) = (D^2 \Phi_\eps(\nabla \bu^\eps) \cddot \nabla^2 \bu^\eps) \cdddot (D^2 \Psi_{\eps, k}(\nabla \bu^\eps) \cddot \nabla^2 \bu^\eps)
\\=  D^2 \Phi_\eps(\nabla \bu^\eps)^{\alpha \beta}_{ij}\, (\bu^\eps)^\beta_{x_j x_k} \, D^2 \Psi_{\eps, k}(\nabla \bu^\eps)^{\alpha \gamma}_{kl}\,(\bu^\eps)^\gamma_{x_l x_i}
\end{multline*} 
which is a sum of expressions of form $\tr A^{\alpha \beta} C^{\beta} B^{\alpha \gamma}C^\gamma$, where $A^{\alpha \beta},B^{\alpha \beta},C^{\beta}$ are $m\times m$ matrices for $\alpha, \beta = 1, \cdots, n$. Unfortunately, the conditions of Lemma \ref{gal} are in general not satisfied, as $A^{\alpha \beta}$, $B^{\alpha \beta}$ may fail to be non-negative definite if $\alpha \neq \beta$. 
\end{remark}

Now we return to the proof of Lemma \eqref{mainlem}. Let $(\varphi_l)_{l\in \mathbb{N}} \subset C^\infty(\overline{{\Omega^\eps}}, \Rm)$ be such that
\begin{equation}\label{Neumapprox} 
\varphi_l\cdot \norm^{\Omega^\eps} = 0 \quad \text{on } \partial {\Omega^\eps}, 
\end{equation} 
\begin{equation}\label{Napproxconv1} 
\varphi_l \to D \Phi_\eps(\nabla \bu^\eps)\quad  \text{in } W^{1,2}({\Omega^\eps}, \Rm),
\end{equation}
\begin{equation}\label{Napproxconv2}
 \nabla \div \varphi_l \to \nabla \div (D \Phi_\eps(\nabla \bu^\eps)) \quad \text{in } L^2({\Omega^\eps}, \Rm). 
\end{equation} 
Such a sequence can be produced by flattening the boundary, even reflection and mollification of the pushforward of $D \Phi_\eps(\nabla \bu^\eps)$ by the flattening diffeomorphism. Furthermore, let $\bar{\norm}^{\Omega^\eps} \in C^\infty(\R^m, \R^m)$ be an extension of $\norm^{\Omega^\eps}$ that is constant on the fibers of a tubular neighborhood of $\partial {\Omega^\eps}$. By Leibniz' rule 
\begin{equation}\label{boundarycalc}
(\nabla \varphi_l) \cdot \bar{\norm}^{\Omega^\eps} = \nabla (\varphi_l \cdot \bar{\norm}^{\Omega^\eps}) - \varphi_l \cdot (\nabla \bar{\norm}^{\Omega^\eps}).
\end{equation}
Due to \eqref{Neumapprox}, $\nabla (\varphi_l \cdot \bar{\norm}^{\Omega^\eps})$ is perpendicular to $\partial {\Omega^\eps}$ on $\partial {\Omega^\eps}$. On the other hand, 
\[D\Psi_{\eps, k}(\nabla \bu^\eps) \cdot \norm^{\Omega^\eps} = \widetilde \Psi_k'(\Phi_\eps(\nabla \bu^\eps)) D\Phi_\eps(\nabla \bu^\eps) \cdot \norm^{\Omega^\eps} = 0 \quad \text{on } \partial {\Omega^\eps}\]
because of  \eqref{ELbceps}. Therefore, 
\[\nabla (\varphi_l \cdot \bar{\norm}^{\Omega^\eps}) \cdot D \Psi_\eps (\nabla \bu^\eps) = 0 \quad \text{on } \partial {\Omega^\eps}\]
and, by virtue of \eqref{boundarycalc}, 
\begin{equation*}
\label{boundarycalc2} 
\int_{\partial {\Omega^\eps}} D \Psi_\eps (\nabla \bu^\eps) \cdot \nabla \varphi_l \cdot \bar{\norm}^{\Omega^\eps} = - \int_{\partial {\Omega^\eps}} D \Psi_\eps (\nabla \bu^\eps) \cdot \nabla \bar{\norm}^{\Omega^\eps} \cdot \varphi_l.
\end{equation*} 
Passing with $l \to + \infty$, by continuity of the trace operator, we obtain
\begin{multline*} \langle \nabla D \Phi_\eps(\nabla \bu^\eps) \cdot \norm^{\Omega^\eps}, D \Psi_{\eps, k}(\nabla \bu^\eps)\rangle_{H^{-\frac{1}{2}}(\partial {\Omega^\eps}, \Rm), H^{\frac{1}{2}}(\partial {\Omega^\eps}, \Rm)} \\ = - \int_{\partial {\Omega^\eps}} D \Psi_{\eps, k} (\nabla \bu^\eps) \cdot \nabla \bar{\norm}^{\Omega^\eps} \cdot D\Phi_\eps (\nabla \bu^\eps) \\ = - \int_{\partial {\Omega^\eps}} \widetilde \Psi_k'(\Phi_\eps(\nabla \bu^\eps)) D\Phi_\eps(\nabla \bu^\eps) \cdot \nabla \bar{\norm}^{\Omega^\eps} \cdot D\Phi_\eps (\nabla \bu^\eps). 
\end{multline*} 
We observe that 
\begin{equation*} 
D\Phi_\eps(\nabla \bu^\eps) \cdot \nabla \bar{\norm}^{\Omega^\eps} \cdot D\Phi_\eps (\nabla \bu^\eps) = \mathcal A (D\Phi_\eps(\nabla \bu^\eps), D\Phi_\eps(\nabla \bu^\eps)), 
\end{equation*} 
where we have denoted by $\mathcal A$ the classical second fundamental form of hypersurface $\partial {\Omega^\eps}$. Since ${\Omega^\eps}$ is convex, $\mathcal A$ is non-negative. We obtain 
\begin{equation} \label{boundarysign} 
\langle \nabla D \Phi_\eps(\nabla \bu^\eps) \cdot \norm^{\Omega^\eps}, D \Psi_{\eps, k}(\nabla \bu^\eps)\rangle_{H^{-\frac{1}{2}}(\partial {\Omega^\eps}, \Rm), H^{\frac{1}{2}}(\partial {\Omega^\eps}, \Rm)} \leq 0. 
\end{equation} 
Combining (\ref{maincomp}, \ref{ellipticineq}, \ref{boundarysign}) and applying convexity of $\Psi_\eps$ yields 
\begin{multline}\label{boundepsk}
\int_{\Omega^\eps} \Psi_{\eps, k}(\nabla \bu^\eps) - \int_{\Omega^\eps} \Psi_{\eps, k}(\nabla \bg) \leq \int_{\Omega^\eps} D \Psi_{\eps, k}(\nabla \bu^\eps) \cdot (\nabla \bu^\eps - \nabla \bg) \\= \lambda\int_{\Omega^\eps} \nabla \div(D \Phi_{\eps, k}(\nabla \bu^\eps)) \cdot D \Psi_{\eps, k}(\nabla \bu^\eps) \leq 0. 
\end{multline}  
Finally, we pass to the limit $k\to +\infty$ with \eqref{boundepsk} using the monotone convergence theorem.
\end{proof} 

\section{Proof of Theorem \ref{w11thm}}
We want to exhibit the minimizer of $E^\lambda_{\bff}$ as a weak limit in $\W$ of a sequence of minimizers of $E^{\lambda, \eps}_{\bff^\eps}$ with suitably chosen $(\bff^\eps)_{\eps \in ]0, \eps_0]}$. For that purpose, we need the following characterization of weak compactness in $L^1(\Omega, \R^N)$.  
\begin{thm}\label{Pettis}
Let $\mathcal F \subset L^1(\Omega, \RN)$. The following conditions are equivalent: 
\begin{itemize} 
\item[(a)] $\mathcal F$ is (sequentially) weakly relatively compact, 
\item[(b)] $\mathcal F$ is uniformly integrable, 
\item[(c)] there exists an even, convex function $\widetilde \Psi \colon \R \to [0, +\infty[$ and $C>0$ such that 
\[\lim_{|p| \to + \infty} \frac{\widetilde \Psi(p)}{|p|} = + \infty \quad \text{and} \quad \int_\Omega \widetilde \Psi(|\bw|) \leq C \text{ for all } \bw \in \mathcal F.\]
\end{itemize} 
\end{thm}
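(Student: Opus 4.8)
The plan is to recognize Theorem~\ref{Pettis} as the conjunction of the Dunford--Pettis theorem, which furnishes (a)$\Leftrightarrow$(b), and the de la Vall\'ee-Poussin criterion, which furnishes (b)$\Leftrightarrow$(c). I read ``uniformly integrable'' in the finite-measure sense $\lim_{M \to +\infty} \sup_{\bw \in \mathcal F} \int_{\{|\bw| > M\}} |\bw| = 0$, which on the bounded domain $\Omega$ is equivalent to $L^1$-boundedness together with equi-absolute continuity of the integrals. Since $L^1(\Omega, \RN)$ is a Banach space, weak relative compactness and weak \emph{sequential} relative compactness agree (Eberlein--\v{S}mulian), so I will argue throughout with sequences. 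Concretely, I would prove the two easy implications (c)$\Rightarrow$(b) and (a)$\Rightarrow$(b), then the construction (b)$\Rightarrow$(c), and finally the hard Dunford--Pettis implication (b)$\Rightarrow$(a).

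For (c)$\Rightarrow$(b): using superlinearity, for each $K > 0$ I fix $M$ with $\widetilde\Psi(p) \geq K|p|$ whenever $|p| \geq M$, whence $\int_{\{|\bw| > M\}} |\bw| \leq \tfrac1K \int_\Omega \widetilde\Psi(|\bw|) \leq C/K$ uniformly in $\bw \in \mathcal F$; letting $K \to +\infty$ gives uniform integrability. For (a)$\Rightarrow$(b) I argue by contradiction: if uniform integrability fails there are $\eta > 0$, functions $\bw_j \in \mathcal F$ and sets $E_j$ with $\Lb^m(E_j) \to 0$ but $\int_{E_j} |\bw_j| \geq \eta$; extracting (by (a)) a weakly convergent subsequence $\bw_j \rightharpoonup \bw$ and testing against indicators shows the measures $E \mapsto \int_E \bw_j$ converge setwise, so by the Vitali--Hahn--Saks theorem they are uniformly absolutely continuous with respect to $\Lb^m$, contradicting $\int_{E_j}|\bw_j| \geq \eta$ with $\Lb^m(E_j) \to 0$.

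For (b)$\Rightarrow$(c) I carry out the de la Vall\'ee-Poussin construction. Set $G(t) = \sup_{\bw \in \mathcal F} \int_{\{|\bw| > t\}} |\bw|$, so $G(t) \to 0$ by (b), and pick integers $c_n \uparrow +\infty$ with $G(c_n) \leq 2^{-n}$. Define the even convex $\widetilde\Psi$ by $\widetilde\Psi(0) = 0$ and $\widetilde\Psi'(t) = \#\{n : c_n \leq t\}$ for $t > 0$; then $\widetilde\Psi'$ is non-decreasing and tends to $+\infty$, so $\widetilde\Psi(p)/|p| \to +\infty$. By the layer-cake formula and Tonelli,
\[
\int_\Omega \widetilde\Psi(|\bw|) = \int_0^{+\infty} \widetilde\Psi'(t)\, \Lb^m(\{|\bw| > t\}) \dd t = \sum_n \int_{\{|\bw| > c_n\}} (|\bw| - c_n) \leq \sum_n G(c_n) \leq 1
\]
uniformly over $\bw \in \mathcal F$, which is (c) with $C = 1$.

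The main obstacle is the hard Dunford--Pettis implication (b)$\Rightarrow$(a). Given $(\bw_j) \subset \mathcal F$, I truncate the modulus, $T_M \bw = \min(|\bw|, M)\, \bw / |\bw|$, so that $|T_M \bw| \leq M$ and $\|\bw - T_M \bw\|_{L^1} \leq G(M)$. For each fixed $M$ the family $(T_M \bw_j)_j$ is bounded in $L^\infty(\Omega, \RN) \subset L^2(\Omega, \RN)$, which is reflexive, hence has a weakly convergent subsequence; a diagonal extraction over $M \in \N$ produces one subsequence along which $T_M \bw_j$ converges weakly in $L^1$ for every $M$. Combining this with the uniform tail bound $\|\bw_j - T_M \bw_j\|_{L^1} \leq G(M) \to 0$ shows that this subsequence is weakly Cauchy in $L^1(\Omega, \RN)$; since $L^1$ is weakly sequentially complete, it converges weakly, giving (a). The delicate points I expect to spend effort on are the uniformity in $j$ of the tail estimate (ensured by the single modulus $G$) and the invocation of weak sequential completeness of $L^1$, for which I would cite the standard reference or supply the short Vitali--Hahn--Saks argument already used for (a)$\Rightarrow$(b).
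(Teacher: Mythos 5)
The paper does not actually prove this statement: it is recorded as a known result, with (a)$\iff$(b) attributed to the Dunford--Pettis theorem, (b)$\iff$(c) to de la Vall\'ee Poussin (citing Rao--Ren), and the identification of weak and sequential weak compactness to Eberlein--Shmulyan. Your proposal identifies exactly the same decomposition and then goes further by reconstructing the standard textbook proofs, and the reconstruction is essentially sound: (c)$\Rightarrow$(b) and the layer-cake computation for (b)$\Rightarrow$(c) are correct as written, and the truncation-plus-diagonal argument for (b)$\Rightarrow$(a) is the classical route. Two points deserve tightening if you were to write this out in full. First, in (a)$\Rightarrow$(b) the Vitali--Hahn--Saks theorem controls the vector-valued set functions $E \mapsto \int_E \bw_j$, not $E \mapsto \int_E |\bw_j|$; to reach a contradiction with $\int_{E_j}|\bw_j| \geq \eta$ you should pass to components and split $E_j$ according to the sign of each component (so that some signed integral over a subset of $E_j$ stays bounded away from $0$), and you should also note that (a) gives $L^1$-boundedness via uniform boundedness, since your definition of uniform integrability subsumes boundedness. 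Second, weak sequential completeness of $L^1$ is itself a nontrivial fact of the same depth (Nikodym/Vitali--Hahn--Saks plus Radon--Nikodym); you flag this, but it can be avoided entirely: the weak limits $v_M$ of $T_M\bw_j$ satisfy $\|v_M - v_{M'}\|_{L^1} \leq G(\min(M,M'))$ by weak lower semicontinuity of the norm, hence converge in $L^1$ to some $\bw$, which is then the weak limit of the subsequence. With these repairs your argument is a complete, self-contained proof of what the paper treats as a citation.
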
 
The equivalence {\it (a)}$\iff${\it (b)} is the content of the Dunford-Pettis theorem. The equivalence {\it (b)}$\iff${\it (c)} is due to de la Vall\' ee Poussin, see \cite[1.2]{RaoRen}. Note that weak compactness and sequential weak compactness are equivalent in Banach spaces (this is the Eberlein-Shmulyan theorem). The following observation \cite[1.2, Corollary 3]{RaoRen} is a very useful immediate consequence of Theorem \ref{Pettis}.  
\begin{cor} \label{superPsi} Let $w \in L^1(\Omega, [0,+\infty[)$. There exists an even, convex function $\widetilde \Psi \colon \R\to [0, +\infty[$ such that 
	\[\lim_{|p| \to + \infty} \frac{\widetilde \Psi(p)}{|p|} = + \infty,  \qquad \int_\Omega \widetilde \Psi(w) < +\infty.\]
	\qed
\end{cor}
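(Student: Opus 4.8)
The plan is to obtain the corollary immediately from Theorem \ref{Pettis} applied to the singleton family $\mathcal F = \{w\}$. First I would note that a single $L^1$ function is uniformly integrable on the finite-measure domain $\Omega$: since $\int_{\{w>R\}} w \to 0$ as $R \to +\infty$ by dominated convergence, and $\int_\Omega w < +\infty$, condition (b) holds for $\mathcal F$. Invoking the de la Vall\'ee Poussin equivalence (b)$\iff$(c) then produces an even, convex, superlinear $\widetilde\Psi$ together with a constant $C$ such that $\int_\Omega \widetilde\Psi(|w|) \le C < +\infty$; since $w \ge 0$ we have $|w| = w$, which is exactly the assertion.

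Should a self-contained argument be preferred, I would unwind the de la Vall\'ee Poussin construction directly. Let $g(t) = |\{x \in \Omega : w(x) > t\}|$ be the distribution function of $w$, so that the layer-cake formula gives $\int_0^\infty g = \int_\Omega w < +\infty$ and hence $g \in L^1(0,\infty)$. I would then choose $t_n \nearrow +\infty$ with $\int_{t_n}^\infty g \le 2^{-n}$, set $\phi(t) = \#\{n : t_n \le t\}$ (a non-decreasing step function with $\phi \to +\infty$), and define $\widetilde\Psi(p) = \int_0^{|p|} \phi(s) \dd s$. This $\widetilde\Psi$ is even and convex as the primitive of a non-decreasing function, and it is superlinear since $\widetilde\Psi(p)/|p| \ge \tfrac12 \phi(|p|/2) \to +\infty$; by Tonelli's theorem the key computation would read
\[ \int_\Omega \widetilde\Psi(w) = \int_0^\infty \phi(s)\, g(s) \dd s = \sum_n \int_{t_n}^\infty g \le \sum_n 2^{-n} < +\infty. \]

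The only genuine tension---and the point I would be careful about---is that superlinear growth of $\widetilde\Psi$ and finiteness of $\int_\Omega \widetilde\Psi(w)$ pull in opposite directions. The summability choice $\int_{t_n}^\infty g \le 2^{-n}$ (equivalently, the machinery already packaged inside Theorem \ref{Pettis}) is precisely the device that reconciles the two, balancing fast growth of $\phi$ against the decay of the tails of $g$; everything else is routine verification.
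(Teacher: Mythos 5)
Your first paragraph is exactly the paper's argument: the corollary is stated there as an immediate consequence of Theorem \ref{Pettis} (the de la Vall\'ee Poussin equivalence) applied to the singleton $\{w\}$, with no further proof given. Your self-contained unwinding of the de la Vall\'ee Poussin construction in the second paragraph is also correct and goes beyond what the paper records, but it does not constitute a different route.
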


Let $\widetilde \Psi \colon \R\to [0, +\infty[$ be an even, convex function such that $\lim_{|p| \to + \infty} \frac{\widetilde \Psi(p)}{|p|} = + \infty$ and $\int_\Omega \widetilde \Psi(\Phi(\nabla \bff)) < +\infty$. Let $(\bff^\eps)_{\eps \in ]0, \eps_0]}$ be the family provided by Lemma \ref{goodseq} given $\bw = \bff$. Using Lemma \ref{mainlem} and \eqref{recseq}, recalling that $\Phi_\eps \geq \Phi$, we deduce 
\[\int_{\Omega} \widetilde{\Psi}(\Phi(\nabla \bu^\eps))\leq \int_{\Omega^\eps} \widetilde{\Psi}(\Phi_\eps(\nabla \bu^\eps))\leq  \int_\Omega \widetilde{\Psi}(\Phi(\nabla \bff))\;+\; 1 \]
for small enough $\eps$. By growth condition \eqref{linear}, we obtain a uniform bound
\begin{equation}\label{superbound}
\int_{\Omega} \widetilde{\Psi}(C_1|\nabla \bu^\eps|)\leq \int_\Omega \widetilde{\Psi}(\Phi(\nabla \bff))\;+\; 1 .
\end{equation} 
From $E^{\lambda, \eps}_{\bff^\eps}(\bu^\eps)\leq E^{\lambda, \eps}_{\bff^\eps}( 0)$, we get also 
\begin{equation}\label{l2bound}
\int_{\Omega} |\bu^\eps|^2 \leq 4 \int_{\Omega^\eps} |\bff^\eps|^2\;+\; 1
\end{equation} 
for small enough $\eps$. %We recall that $\sup_{\eps \in ]0, \eps_0]}\int_{\Omega^\eps} |\bff^\eps|^2<+ \infty$. 
Invoking Theorem \ref{Pettis}, we deduce from \eqref{superbound} and \eqref{l2bound} the existence of $\bu \in \W$ and a sequence $(\eps_k)_{k\in \mathbb{N}}$, $\eps_k \to 0$ as $k \to +\infty$, such that 
\begin{equation}\label{weakconv} 
\bu^{\eps_k} \rightharpoonup \bu \quad \text{in } \W. 
\end{equation}
We recall that Lemma \ref{Gama-zb} yields $\Gamma$-convergence of $E^{\lambda,\varepsilon}_{\bff^\eps}$ to $E^{\lambda}_{\bff}$ with respect to the weak convergence in $\W$. Thus, we deduce that $u$ is a minimizer of $E^{\lambda}_{\bff}$. 

Now, let $\widetilde \Psi\colon \R \to [0, +\infty[$ be any even, convex function and let $(\bff^\eps)_{\eps \in ]0, \eps_0]}$ be the family produced by Lemma \ref{goodseq} given $\widetilde \Psi$ and $\bw = \bff$. We recall that by Lemma \ref{mainlem},
\[\int_{\Omega} \widetilde{\Psi}(\Phi(\nabla \bu^\eps))\leq \int_{\Omega^\eps} \widetilde{\Psi}(\Phi_{\eps}(\nabla \bu^{\eps}))\leq \int_{\Omega^\eps} \widetilde{\Psi}(\Phi_{\eps}(\nabla \bff^\eps)),\]
whence \eqref{Psiest} follows by weak convergence of $\bu^{\eps_k}$ and \eqref{recseq}. 
\qed

\section{Proof of Theorem \ref{Dsuthm}} 
Let $\widetilde \Psi\colon \R \to [0, +\infty[$ be an even, convex function of at most linear growth. We introduce notation: $\Psi_\eps = \widetilde \Psi \circ \Phi_\eps$, $\Psi = \widetilde \Psi \circ \Phi$, $\Psi^\infty(\xi) = \lim_{t \to + \infty} \frac{\Psi(t \xi)}{t}$ for $\xi \in \Rm$ and 
\[\overline{F}_\Psi(\bw) = \int_{\Omega} \Psi (\nabla^{ac} \bw) + \int_{\Omega} \Psi^\infty \left(\tfrac{\nabla^{s} \bw}{|\nabla^{s} \bw|}\right) \dd |\nabla^{s} \bw| \]
for $\bw \in BV(\Omega)$. The functional $\overline{F}_\Psi$ is weakly-$*$ lower semicontinuous \cite{GoffmanSerrin}, see also \cite[Theorem 5.47]{AmbrosioFuscoPallara}. 

We now give $BV$ variants of Lemmata \ref{goodseq} and \ref{Gama-zb}.

\begin{lemma}\label{goodseqbv} 
Let $\widetilde \Psi \colon \R \to [0, +\infty[$ be an even, convex function of at most linear growth and let $\bw \in BV(\Omega) \cap L^2(\Omega)$. There exists a family of maps $(\bw^\eps)_{\eps \in ]0, \eps_0]}$ such that $\bw^\eps \in W^{1,\infty}(\Omega^\eps)$, $\bw^\eps \to \bw$ in $L^2(\Omega)$ and weakly-$*$ in $BV(\Omega)$ as $\eps \to 0^+$,   
\begin{equation}\label{recseqbv} 
\lim_{\eps \to 0^+} \int_{\Omega^\eps} \Psi_\eps (\nabla \bw^\eps) = \overline{F}_\Psi(\bw)
\end{equation}
and
	\begin{equation}\label{recseq2bv} 
		\lim_{\eps \to 0^+} \int_{\Omega^\eps\setminus \Omega} (\bw^\eps)^2 = 0. 
\end{equation}
\end{lemma}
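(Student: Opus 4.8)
The plan is to mirror the proof of Lemma \ref{goodseq}, replacing the two places where absolute continuity of $\nabla \bw$ was used by arguments adapted to the singular part through the recession function $\Psi^\infty$. As before I set $\Psi = \widetilde \Psi \circ \Phi$ and $\Psi_\eps = \widetilde \Psi \circ \Phi_\eps$; both are convex, and since $\widetilde \Psi$ and $\Phi$ are of at most linear growth so is $\Psi$, so $\Psi^\infty$ is well defined. I extend $\bw$ to $\widetilde \bw \in BV(\Rm) \cap L^2(\Rm)$, fix $x_0 \in \Omega$, and for $\mu>0$ define the dilation $S_\mu(x) = x_0 + (1+\mu)(x-x_0)$ and $\widetilde \bw^\mu = \widetilde \bw \circ S_\mu^{-1}$ exactly as in Lemma \ref{goodseq}. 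The candidate family is again $\bw^\eps = \varphi_{\hat \delta(\eps)} * \widetilde \bw^{\hat \mu(\hat \delta(\eps))}\big|_{\Omega^\eps}$, with $\hat \mu(\delta)$ the least $\mu$ for which $\Omega + B_\delta(0) \subset S_\mu(\Omega)$ and $\hat \delta(\eps)$ defined by the same infimum as before over the analogues of (\ref{pr1}, \ref{pr2}). Since $\nabla(\varphi_\delta * \widetilde \bw^\mu)(x) = \int \varphi_\delta(x-y)\dd \nabla \widetilde \bw^\mu(y)$ is bounded by $\|\varphi_\delta\|_\infty |\nabla \widetilde \bw^\mu|(\Rm)$, we have $\bw^\eps \in W^{1,\infty}(\Omega^\eps)$, and the proof that $\hat \delta(\eps) \to 0$ goes through verbatim using local uniform convergence $\Psi_\eps \to \Psi$ and $|\Omega^\eps \setminus \Omega| \to 0$.

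The first new ingredient is a scaling estimate for the relaxed energy. Writing $\overline{F}_\Psi^{A}$ for the functional $\overline{F}_\Psi$ with $\Omega$ replaced by $A$, I use convexity of $\Psi$ for the absolutely continuous part and one-homogeneity of $\Psi^\infty$ for the singular part, together with $\nabla^s \widetilde \bw^\mu = (1+\mu)^{m-1} (S_\mu)_\# \nabla^s \bw$ (whose polar direction is preserved by the isotropic dilation $S_\mu$), to obtain the analogue of \eqref{wtildedeltaest},
\begin{equation*}
\overline{F}_\Psi^{S_\mu(\Omega)}(\widetilde \bw^\mu) \leq (1+\mu)^{m-1}\overline{F}_\Psi(\bw) + \mu(1+\mu)^{m-1}|\Omega|\,\Psi(0).
\end{equation*}
Here it matters that the spurious jump of the $BV$ extension $\widetilde \bw$ is supported on $S_\mu(\partial \Omega) = \partial S_\mu(\Omega)$, which is disjoint from the open set $S_\mu(\Omega)$ and hence invisible to $\overline{F}_\Psi^{S_\mu(\Omega)}$.

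The second new ingredient is a Jensen-type inequality accounting for the singular part. Representing $\Psi(\xi) = \sup\{a + b\cdot \xi\}$ over its affine minorants, for which necessarily $|b| \leq C$ and $b \cdot \nu \leq \Psi^\infty(\nu)$ by linear growth, and using $\int \varphi_\delta(x-y)\dd y = 1$ at points $x$ with $B_\delta(x) \subset S_{\hat\mu}(\Omega)$, I show pointwise that
\begin{equation*}
\Psi\big(\varphi_\delta * \nabla \widetilde \bw^\mu(x)\big) \leq \int \varphi_\delta(x-y)\,\Psi(\nabla^{ac}\widetilde \bw^\mu(y))\dd y + \int \varphi_\delta(x-y)\,\Psi^\infty\Big(\tfrac{\nabla^s \widetilde \bw^\mu}{|\nabla^s \widetilde \bw^\mu|}(y)\Big)\dd |\nabla^s \widetilde \bw^\mu|(y).
\end{equation*}
Integrating in $x$ over $\Omega$, applying Fubini and $\int_\Omega \varphi_\delta(x-y)\dd x \leq 1$, gives the analogue of \eqref{convPhiest},
\begin{equation*}
\int_\Omega \Psi(\nabla \bw^\eps) \leq \overline{F}_\Psi^{\Omega + B_\delta(0)}(\widetilde \bw^\mu) \leq \overline{F}_\Psi^{S_{\hat\mu}(\Omega)}(\widetilde \bw^\mu),
\end{equation*}
and combining this with the scaling estimate and the analogue of \eqref{sbound} (controlling $\Psi_\eps - \Psi$ on $\Omega^\eps$ and the tail $\int_{\Omega^\eps \setminus \Omega}\Psi(\nabla \bw^\eps)$ via $\hat\delta(\eps)$) yields $\limsup_{\eps \to 0^+}\int_{\Omega^\eps}\Psi_\eps(\nabla \bw^\eps) \leq \overline{F}_\Psi(\bw)$.

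For the matching lower bound I use $\Psi_\eps \geq \Psi$ and that each $\bw^\eps$ is Lipschitz, so $\int_\Omega \Psi(\nabla \bw^\eps) = \overline{F}_\Psi(\bw^\eps)$. Since $\bw^\eps \to \bw$ in $L^1(\Omega)$ and $\sup_\eps |\nabla \bw^\eps|(\Omega) < \infty$, we get that $\bw^\eps \to \bw$ weakly-$*$ in $BV(\Omega)$, and the weak-$*$ lower semicontinuity of $\overline{F}_\Psi$ quoted above gives $\liminf_{\eps \to 0^+}\int_{\Omega^\eps}\Psi_\eps(\nabla \bw^\eps) \geq \liminf \overline{F}_\Psi(\bw^\eps) \geq \overline{F}_\Psi(\bw)$; together with the $\limsup$ bound this is \eqref{recseqbv} and simultaneously records $\bw^\eps \to \bw$ in $L^2(\Omega)$ (as in Lemma \ref{goodseq}, via $L^2_{loc}$ convergence of $\widetilde \bw^\mu$ and of mollifications) and weakly-$*$ in $BV(\Omega)$. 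Finally \eqref{recseq2bv} follows exactly as \eqref{recseq2}: Jensen gives $(\bw^\eps)^2 \leq \varphi_{\hat\delta(\eps)} * (\widetilde \bw^{\hat\mu})^2$, and after a change of variables the integral over $\Omega^\eps \setminus \Omega$ is dominated by $\int_{S_{\hat\mu}^{-1}(\Omega^\eps \setminus \Omega + B_{\hat\delta(\eps)}(0))}\widetilde \bw^2 \to 0$. The main obstacle is establishing the two displayed recession-function inequalities rigorously, and in particular tracking where the singular measure $\nabla^s \widetilde \bw^\mu$, including the jump created by the extension, is supported relative to the open sets $\Omega + B_\delta(0)$ and $S_\mu(\Omega)$; everything else is a transcription of the proof of Lemma \ref{goodseq}.
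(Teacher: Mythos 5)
Your proposal is correct and follows essentially the same route as the paper: the paper's proof likewise reuses the construction of Lemma \ref{goodseq} verbatim and lists exactly your three modifications — interpreting $\Psi(\nabla \bw)$ as the measure $\Psi(\nabla^{ac}\bw)\,\Lb^m + \Psi^\infty\bigl(\tfrac{\nabla^s\bw}{|\nabla^s\bw|}\bigr)|\nabla^s\bw|$ in the scaling estimate, a recession-function Jensen inequality for the mollification (which the paper obtains by citing Lemma 2.2 of Demengel--Temam applied to $\Psi - \Psi(0)$, whereas you derive it directly from affine minorants), and weak-$*$ lower semicontinuity for the $\liminf$ bound. Your attention to the spurious jump of the extension being supported on $\partial S_\mu(\Omega)$, away from the open sets over which the energies are computed, is a correct and worthwhile point that the paper leaves implicit.
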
 
\begin{proof} 
We construct the sequence $\bw^\eps$ as in Lemma \ref{goodseq}. The proof that it satisfies our assertions also follows along the same lines. The important changes are: 
\begin{itemize} 
\item $\Psi(\nabla \bw)$ has to be understood as the measure $\Psi (\nabla^{ac} \bw) \Lb^m + \Psi^\infty \left(\frac{\nabla^{s} \bw}{|\nabla^{s} \bw|}\right) |\nabla^{s} \bw|$ (and $\Psi(\nabla \widetilde \bw)$, $\Psi(\nabla \widetilde \bw^\mu)$ likewise), 
\item to obtain inequality \eqref{convPhiest}, we apply \cite[Lemma 2.2]{DemengelTemam} to the function $\Psi - \Psi(0)$, 
\item in \eqref{lowerest} we use weak-$*$ lower semicontinuity of $\overline{F}$ on $BV(\Omega)$ (recall \eqref{overlineF}). 
\end{itemize} 
\end{proof} 

\begin{lemma} \label{Gama-zbbv}
	Let $(\bff^\eps)_{\eps \in ]0, \eps_0]}$ be a family of maps such that $\bff^\eps \in L^2(\Omega^\eps)$ for $\eps \in ]0, \eps_0]$, $\bff^\eps \to \bff$ in $L^2(\Omega)$ and $\int_{\Omega^\eps\setminus \Omega} (\bff^\eps)^2 \to 0$ as $\eps \to 0^+$. Then $E^{\lambda, \eps}_{\bff^\eps}$ $\Gamma$-converges to $\overline{E}^\lambda_{\bff}$ as $\eps \to 0^+$ with respect to the weak-$*$ convergence in $BV(\Omega)$ and $L^2(\Omega)$.
\end{lemma}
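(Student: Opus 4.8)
The plan is to prove the $\Gamma$-convergence of $E^{\lambda,\eps}_{\bff^\eps}$ to $\overline{E}^\lambda_{\bff}$ with respect to the product of the weak-$*$ topology on $BV(\Omega)$ and the strong (or weak) topology on $L^2(\Omega)$, following closely the structure of the proof of Lemma \ref{Gama-zb}. I would verify the two defining inequalities of $\Gamma$-convergence separately: the liminf (lower bound) inequality and the existence of a recovery sequence (upper bound inequality).

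For the lower bound, let $\bw \in BV(\Omega) \cap L^2(\Omega)$ and take any family $(\bw^\eps)$ with $\bw^\eps \in W^{1,2}(\Omega^\eps)$ such that $\bw^\eps \to \bw$ weakly-$*$ in $BV(\Omega)$ and in $L^2(\Omega)$. Since $\Phi_\eps \geq \Phi$ by convexity, I would first discard the excess domain, writing
\[
E^{\lambda,\eps}_{\bff^\eps}(\bw^\eps) \geq \lambda \int_{\Omega} \Phi(\nabla \bw^\eps) + \tfrac{1}{2}\int_{\Omega} |\bw^\eps - \bff^\eps|^2.
\]
The fidelity term passes to the limit by strong $L^2$ convergence of both $\bw^\eps$ and $\bff^\eps$. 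For the gradient term, the key point is that $\int_\Omega \Phi(\nabla \bw^\eps) = \overline{F}(\bw^\eps)$ since each $\bw^\eps$ is Sobolev (its singular gradient vanishes), so weak-$*$ lower semicontinuity of $\overline{F}$ on $BV(\Omega)$ yields $\liminf_{\eps \to 0^+} \int_\Omega \Phi(\nabla \bw^\eps) \geq \overline{F}(\bw)$. Combining these gives $\liminf_{\eps \to 0^+} E^{\lambda,\eps}_{\bff^\eps}(\bw^\eps) \geq \overline{E}^\lambda_{\bff}(\bw)$.

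For the upper bound, given any $\bw \in BV(\Omega) \cap L^2(\Omega)$ I would invoke Lemma \ref{goodseqbv} with $\widetilde \Psi = |\cdot|$ to obtain a family $(\bw^\eps)$ with $\bw^\eps \in W^{1,\infty}(\Omega^\eps)$, converging to $\bw$ both weakly-$*$ in $BV(\Omega)$ and strongly in $L^2(\Omega)$, with $\int_{\Omega^\eps} \Phi_\eps(\nabla \bw^\eps) \to \overline{F}(\bw)$ and $\int_{\Omega^\eps \setminus \Omega} (\bw^\eps)^2 \to 0$. The fidelity term then splits as in Lemma \ref{Gama-zb}:
\[
\int_{\Omega^\eps}|\bw^\eps - \bff^\eps|^2 = \int_{\Omega}|\bw^\eps - \bff^\eps|^2 + \int_{\Omega^\eps\setminus \Omega}|\bw^\eps - \bff^\eps|^2 \to \int_{\Omega}|\bw - \bff|^2,
\]
where the boundary-layer integral vanishes because both $\int_{\Omega^\eps \setminus \Omega}(\bw^\eps)^2 \to 0$ and $\int_{\Omega^\eps \setminus \Omega}(\bff^\eps)^2 \to 0$. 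Hence $E^{\lambda,\eps}_{\bff^\eps}(\bw^\eps) \to \overline{E}^\lambda_{\bff}(\bw)$, exhibiting the recovery sequence.

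The main obstacle, compared with the proof of Lemma \ref{Gama-zb}, is that the limit functional $\overline{E}^\lambda_{\bff}$ now contains the singular-gradient term, so the lower bound cannot rely on the elementary convex-integral lower semicontinuity used before; instead it rests essentially on the weak-$*$ lower semicontinuity of $\overline{F}$ on $BV(\Omega)$ (which is the content of \cite{GoffmanSerrin}, see also \cite[Theorem 5.47]{AmbrosioFuscoPallara}). A minor technical point to handle carefully is that the competitors $\bw^\eps$ are defined on the enlarged domains $\Omega^\eps$, so one must restrict to $\Omega$ before applying lower semicontinuity; this is harmless since $\Phi \geq 0$ and $\Phi_\eps \geq \Phi$ guarantee the discarded contribution on $\Omega^\eps \setminus \Omega$ is nonnegative.
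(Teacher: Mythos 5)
Your proposal is correct and follows essentially the same route as the paper, which simply carries over the proof of Lemma \ref{Gama-zb} with the two substitutions you identify: weak-$*$ lower semicontinuity of $\overline{F}$ on $BV(\Omega)$ for the liminf inequality, and Lemma \ref{goodseqbv} with $\widetilde\Psi=|\cdot|$ for the recovery sequence. One tiny imprecision: in the lower bound the competitors $\bw^\eps$ converge only weakly in $L^2(\Omega)$, so the fidelity term is handled by weak lower semicontinuity of the $L^2$ norm (as in Lemma \ref{Gama-zb}), not by strong convergence of $\bw^\eps$; this does not affect the conclusion.
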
 
\begin{proof} 
The proof is the same as in the case of Lemma \ref{Gama-zb}, except that we need to use weak-$*$ lower semicontinuity of $\overline{F}$ on $BV(\Omega)$ and Lemma \ref{goodseqbv}. 
\end{proof} 

Now, given $l \in \N$, let $\widetilde \Psi(p) = (|p|-l)_+$ for $p \in \mathbb R$. Let $(\bff^\eps)_{\eps \in ]0, \eps_0]}$ be the family provided by Lemma \ref{goodseqbv} given $\bw = \bff$. We denote $\bu^\eps$ the minimizer of $E^{\lambda, \eps}_{\bff^\eps}$. Using $E^{\lambda, \eps}_{\bff^\eps}(\bu^\eps) \leq E^{\lambda, \eps}_{\bff^\eps}(0)$ and \eqref{linear} we get
\[ \lambda C_1 \int_\Omega |\nabla \bu^\eps| + \tfrac{1}{4} \int_\Omega |\bu^\eps|^2 \leq \int_\Omega |\bff^\eps|^2.\]
As the r.\,h.\,s\;is bounded, there exists $\bu \in BV(\Omega) \cap L^2(\Omega)$ and a sequence $(\eps_k)_{k \in \N}$, $\eps_k \to 0^+$ such that $\bu^{\eps_k}$ converges weakly-$*$ in $BV(\Omega)$ and $L^2(\Omega)$. Due to $\Gamma$-convergence (Lemma \ref{Gama-zbbv}) the limit $\bu$ is the minimizer of $\overline{E}^\lambda_{\bff}$.

We recall that Lemma \ref{mainlem} yields 
\[\int_{\Omega^{\eps_k}} \Psi_{\eps_k}(\nabla \bu^{\eps_k}) \leq \int_{\Omega^{\eps_k}} \Psi_{\eps_k}(\nabla \bff^{\eps_k})\] 
for $k \in \N$. Passing to the limit $k\to \infty$, by weak-$*$ lower semicontinuity of $\overline{F}_\Psi$ and \eqref{recseqbv} we obtain 
\begin{equation} \label{overlineFbound} 
\overline{F}_\Psi(\bu) \leq \overline{F}_\Psi(\bff).
\end{equation}
We note that $\Psi^\infty = \Phi^\infty$. Therefore, \eqref{overlineFbound} translates to 
\begin{equation*}
    \int_\Omega \left(\Phi(\nabla^{ac} \bu)- l\right)_+ +\int_\Omega \Phi^\infty\left(\tfrac{\nabla^s \bu}{|\nabla^s \bu|}\right) \dd |\nabla^s \bu| \leq \int_\Omega \left(\Phi(\nabla^{ac} \bff)- l\right)_+ +\int_\Omega \Phi^\infty\left(\tfrac{\nabla^s \bff}{|\nabla^s \bff|}\right) \dd |\nabla^s \bff|.
\end{equation*}
We pass to the limit $l \to + \infty$. 
\qed

\section{Proof of Theorem \ref{gradflow}} 

Given $\bu_0 \in L^2(\Omega) \cap BV(\Omega)$, let $\bu \in W^{1,2}(0, \infty; L^2(\Omega))$ be the solution to the initial value problem $\bu_t \in - \partial \overline{F}(\bu)$ for a.\,e.\;$t>0$, $\bu(0) = \bu_0$. We recall that $\bu$ is given by the nonlinear exponential formula \cite[Corollary 4.4]{maximaux}
\begin{equation}\label{expformula} \bu(t) = \lim_{n\to +\infty} \left(\mathrm{id} + \tfrac{t}{n}\partial \overline F\right)^{-n} \bu_0.
\end{equation} 
This limit is understood in $L^2(\Omega)$ or equivalently in weak-$*$ convergence of $BV(\Omega)$ (as the sequence is uniformly bounded in $BV(\Omega)$, see below). Denoting 
\[\bu^{n,k}(t) = \left(\mathrm{id} + \tfrac{t}{n}\partial \overline F\right)^{-k} \bu_0\] 
we have 
\[\bu^{n,k}(t) + \tfrac{t}{n} \partial \overline F(\bu^{n,k}(t)) \ni \bu^{n, k-1}(t)\]
for $k=1,\ldots, n$, $t>0$. Equivalently, $\bu^{n,k}(t)$ is the minimizer of $\overline E^{\tfrac{t}{n}}_{\bu^{n,k-1}(t)}$. With the notation from previous section, we have for all $n= 1, 2, \ldots$, $k=1, \ldots, n$, $t>0$, $\overline{F}_\Psi(\bu^{n,k}(t)) \leq \overline{F}_\Psi(\bu_0)$
and therefore, by weak-$*$ lower semicontinuity of $\overline{F}_\Psi$, 
\begin{equation*}
    \overline{F}_\Psi(\bu(t)) \leq \overline{F}_\Psi(\bu_0).  
\end{equation*}
Recalling that $\Psi(\xi) = (\Phi(\xi) - l)_+$ and passing to the limit $l \to + \infty$ we recover \eqref{Dsuestgf}. 

Now, suppose that $\bu_0 \in W^{1,1}(\Omega)$. By Corollary \ref{superPsi}, there exists a convex function $\widetilde \Psi$ of superlinear growth such that $\int_\Omega \widetilde \Psi(\Phi(\bu_0))< +\infty$. Then, by Theorem \ref{w11thm}, for all $n= 1, 2, \ldots$, $k=1, \ldots, n$, $t>0$, we have $\bu^{n,k}(t) \in W^{1,1}(\Omega)$ and 
\[\int_\Omega \widetilde \Psi(\Phi(\nabla \bu^{n,k}(t))) \leq \int_\Omega \widetilde \Psi(\Phi(\nabla \bu_0)).\]
By Theorem \ref{Pettis} and \eqref{expformula}, we obtain that $\bu(t) \in W^{1,1}(\Omega)$ for $t>0$ and the convergence in \eqref{expformula} can be upgraded to weak $W^{1,1}(\Omega)$ convergence. Consequently, 
\[\int_\Omega \widetilde \Psi(\Phi(\nabla \bu(t))) \leq \int_\Omega \widetilde \Psi(\Phi(\nabla \bu_0))\] 
for any even, convex $\widetilde \Psi \colon \R \to [0, +\infty[$.  

\section*{Appendix: Second derivatives for the approximate problem.}
Even though Theorems \ref{w11thm}-\ref{gradflow} deal only with the scalar case, we present the regularity result below in vectorial setting.

Let $\Omega \subset \R^m$ be a $C^2$ bounded domain, let $\lambda >0$ and $\bff \in L^2(\Omega, \R^n)$. We consider here the functional $F^\lambda_{\bff}$ on $W^{1,2}(\Omega, \R^n)$ given by  
\[ F^\lambda_{\bff}(\bu) =  \lambda \int_\Omega \Phi(\nabla \bu) + \frac {1}{2}\int_\Omega|\bu - \bff|^2,\]
where $\Phi \in C^2(\Rnm)$ is uniformly convex, i.\,e.\;there exists $\mu >0$ such that
\begin{equation}\label{unifconv}\tfrac{1}{\mu} I^{n\times m} \leq D^2\Phi(A) \leq \mu I^{n\times m} \quad \text{for } A \in \Rnm. 
\end{equation}
We have denoted by $I^{n\times m}$ the identity matrix on $\Rnm$. Possibly enlarging $\mu$, we will also assume 
\begin{equation} \label{DPhi0} 
|D\Phi(0)| \leq \mu. 
\end{equation}
\begin{prop} Let $\bu \in W^{1,2}(\Omega, \R^n)$ be the minimizer of $F^\lambda_{\bff}$. Then, $\bu \in W^{2,2}(\Omega, \R^n)$. 
\end{prop}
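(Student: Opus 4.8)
The plan is to establish $W^{2,2}$ interior and boundary regularity for the minimizer $\bu$ of the uniformly convex functional $F^\lambda_{\bff}$ via the \emph{difference quotient method} applied to the Euler--Lagrange system. First I would record the Euler--Lagrange equations: since $\Phi \in C^2(\Rnm)$ with \eqref{unifconv}, the functional $F^\lambda_{\bff}$ is smooth, coercive and strictly convex on $W^{1,2}(\Omega, \R^n)$, so the minimizer $\bu$ exists, is unique, and satisfies
\begin{equation*}
\bu - \bff = \lambda\, \div(D\Phi(\nabla \bu)) \quad \text{in } \Omega, \qquad D\Phi(\nabla \bu) \cdot \norm^\Omega = 0 \quad \text{on } \partial \Omega,
\end{equation*}
in the weak sense. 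Testing the weak form against $\varphi \in W^{1,2}(\Omega, \R^n)$ gives the variational identity $\int_\Omega D\Phi(\nabla \bu) \cddot \nabla \varphi + \frac{1}{\lambda}\int_\Omega (\bu - \bff)\cdot \varphi = 0$, which is the object to be differentiated.

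The core of the argument is the \textbf{interior estimate}. For a direction $e_s$ and small $h$, I would use the test function $\varphi = \partial_s^{-h}(\eta^2\, \partial_s^{h} \bu)$, where $\partial_s^h$ denotes the difference quotient in direction $e_s$ and $\eta \in C_c^\infty(\Omega)$ is a cutoff. Substituting into the variational identity and using discrete integration by parts, the principal term becomes $\int_\Omega \eta^2\, \partial_s^h(D\Phi(\nabla \bu)) \cddot \partial_s^h \nabla \bu$. By the mean value theorem along the segment joining $\nabla\bu(x)$ and $\nabla\bu(x+he_s)$, one has $\partial_s^h(D\Phi(\nabla \bu)) = \overline{D^2\Phi}\, \partial_s^h \nabla \bu$ for some averaged Hessian $\overline{D^2\Phi}$, so the lower bound in \eqref{unifconv} yields coercivity $\tfrac{1}{\mu}\int_\Omega \eta^2 |\partial_s^h \nabla \bu|^2 \le (\text{principal term})$. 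The remaining terms---those involving $\nabla \eta$ and the fidelity contribution $\frac{1}{\lambda}(\bu - \bff)$---are bounded using the upper bound in \eqref{unifconv}, Young's inequality, and the standard difference-quotient bound $\|\partial_s^h \bu\|_{L^2} \le \|\nabla \bu\|_{L^2}$; here the $L^2(\Omega,\R^n)$ membership of $\bff$ is exactly what is needed to control the source term. Absorbing the coercive part produces $\|\eta\, \partial_s^h \nabla \bu\|_{L^2} \le C$ uniformly in $h$, and letting $h \to 0$ gives $\nabla \bu \in W^{1,2}_{loc}(\Omega, \Rnm)$, i.e.\ $\bu \in W^{2,2}_{loc}$.

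To reach $W^{2,2}$ \emph{up to the boundary}, I would pass to boundary charts. Using the $C^2$ regularity of $\partial\Omega$, I locally flatten the boundary by a $C^2$ diffeomorphism, which transforms the system into one of the same structure (with the ellipticity constants controlled by the $C^2$ norm of the chart and the natural boundary condition becoming a conormal condition). One then takes difference quotients only in \emph{tangential} directions $e_1,\dots,e_{m-1}$---these are admissible because tangential translations preserve the half-space, so the test function $\partial_s^{-h}(\eta^2 \partial_s^h \bu)$ stays in $W^{1,2}$ and respects the boundary condition---yielding $L^2$ bounds on all tangential second derivatives $\partial_{x_i x_j}\bu$ with $i$ or $j < m$. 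The single remaining purely normal second derivative $\partial_{x_m x_m}\bu$ is then recovered algebraically by solving the PDE itself for it: the top equation $\div(D\Phi(\nabla\bu)) = \frac1\lambda(\bu-\bff)$ expresses $D^2\Phi$ contracted with $\nabla^2\bu$ in terms of quantities now known to be in $L^2$, and the lower ellipticity bound \eqref{unifconv} lets one invert for $\partial_{x_m x_m}\bu \in L^2$.

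The \textbf{main obstacle} is the boundary estimate: one must verify that the flattening diffeomorphism genuinely produces an equation to which the tangential difference-quotient scheme applies, that the natural (conormal) boundary condition is preserved under tangential translation so that the test functions remain admissible, and that the error terms generated by the non-constant coefficients coming from the chart Jacobian are lower order and can be absorbed. Handling the conormal boundary term carefully---rather than a Dirichlet condition, where reflection is cleaner---is the delicate point, and it is precisely here that the argument must be spelled out; this is the content deferred to the appendix. The recovery of the normal derivative from the equation, by contrast, is routine once uniform convexity \eqref{unifconv} is in hand.
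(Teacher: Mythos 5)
Your proposal is correct and follows essentially the same route as the paper's appendix proof: the weak Euler--Lagrange system, difference quotients in the interior, boundary flattening by a $C^2$ chart followed by tangential difference quotients with test functions of the form $\partial_i^{-h}(\varphi^2\,\partial_i^h \widetilde\bu)$, and recovery of the purely normal second derivative by solving the equation algebraically using the lower bound in \eqref{unifconv}. The only cosmetic difference is that the paper cites the literature for the interior estimate and spells out in full the bookkeeping of the error terms produced by the non-constant chart Jacobian $Q$, which is exactly the step you correctly flag as the delicate part.
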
  
\begin{proof} 
The proof employs the usual difference quotient technique. As in \cite{GiustiMiranda1968}, we prove that $\bu \in W^{2,2}_{loc}(\Omega, \R^n)$. However, as far as the boundary regularity is concerned, we failed to find a satisfactory reference. The treatments presented in \cite[6.3.2]{evans} and \cite[8.4]{Giusti2003} are the closest to our needs that we know of. In the former, general linear elliptic equation is handled, while in the latter quasilinear elliptic equation of form $\div A(\nabla u) = 0$ is considered. In both cases the equation is supplemented with homogeneous Dirichlet boundary condition, but the same proofs work with homogeneous Neumann condition. However, in \cite[6.4]{Giusti2003} only half-ball estimates are obtained. In the case of arbitrary $\Omega$, the need to flatten the boundary complicates the situation, since after the change of variables the form of equation changes. Considering vector-valued $\bu$ introduces further technical difficulty. For these reasons, we include here the complete proof of integrability of the second derivative up to the boundary.\footnote{It has been pointed out by an anonymous referee, that with some additional effort this result could be inferred from \cite{Hamburger}. However, we still believe that it is worthwhile to include the detailed proof here.}

	We recall that $\bu$ satisfies the Euler-Lagrange system 
	\begin{equation} \label{ELeqA} 
\bu - \bff = \lambda \div(D \Phi(\nabla \bu)) \quad \text{in } \Omega, 
\end{equation} 
\begin{equation} \label{ELbcA} 
D \Phi(\nabla \bu)\cdot \norm^\Omega = 0 \quad \text{on } \partial\Omega
\end{equation}
 in a weak sense. In other words,
 \begin{equation}\label{pr3}
  \int_\Omega \lambda D \Phi(\nabla \bu)\nabla \varphi  +\int_\Omega(\bu - \bff)\varphi =0   
 \end{equation}
 holds for all test functions $\varphi\in W^{1,2}(\Omega,\mathbb{R}^n)$. 
	
Let $x_0 \in \partial \Omega$, $r>0$ be such that $\partial \Omega \cap B_{4 r}(x_0)$ coincides (up to isometry) with a graph of a $C^2$ function. In this case, there exists an open set $U \subset \R^m$ and a $C^2$ diffeomorphism $S$ on $B_{4 r}(x_0)$ that maps $B_{3 r}(x_0)$ onto $U$ and $\Omega \cap B_{3 r}(x_0)$ onto $U^+ = \{(y^1, \ldots, y^m) \in U \colon y^m >0\}$. Furthermore, $\det DS = 1$ in $B_{4 r}(x_0)$ \cite[Appendix C.1]{evans}. For $y \in U$, we denote $Q(y) = DS(S^{-1}(y))$. This defines a function $Q \in C^1(\overline{U},SL(m))$. 

For $0< s \leq 3r$, we write $U_s = S(B_s(x_0))$, $U_s^+ = U_s \cap U^+$ and we set $U^0 = \{x \in U \colon x^m = 0 \}$, which coincides with $\partial U^+ \cap U$. We denote 
\begin{equation}\label{CQ}C_Q = \max \left(\|Q\|_{C^1\left(\overline{U_{3r}}, SL(m)\right)}, \sup_{y \in S\left(\overline{U_{3r}}\right)} |Q(y)^{-1}|\right).
\end{equation} 
Furthermore, we define $\widetilde \bff \in L^2(U^+)$ by $\widetilde \bff(y) = \bff(S^{-1}(y))$ for $y \in U^+$ and $\widetilde \bu \in W^{1,2}(U^+)$ by $\widetilde \bu(y) = \bu(S^{-1}(y))$ for $y \in U^+$. 

Let us take any test function $\psi\in W^{1,2}_0(B_{3r})$ in (\ref{pr3}). After performing the change of variables $y=S(x)$ and taking into account that $\det DS(x) =1$ we reach
\begin{equation}\label{pr4}
-\int_{U^+} \lambda  D \Phi(Q\, \nabla \widetilde\bu) \cdot Q \nabla \widetilde\psi
= \int_{U^+}(\widetilde \bu - \widetilde\bff)\widetilde \psi,
\end{equation}
where $\widetilde\psi(y) = \psi(S^{-1}(y))$. Since $S$ is a diffeomorphism, $\psi$ is in fact any test function from $W^{1,2}(U^+)$ vanishing on $\partial U^+ \setminus U^0$.

%It is a matter of a straightforward calculation to show that $\widetilde \bu$ satisfies 
%	\begin{equation} \label{ELeqAtilde} 
%\widetilde \bu - \widetilde\bff = \lambda \div(Q^T D \Phi(Q\, \nabla \widetilde\bu)) \quad \text{in } U^+, 
%\end{equation} 
%\begin{equation} \label{ELbcAtilde} 
%Q^T D \Phi(Q\, \nabla \widetilde\bu)\cdot \be_m = 0 \quad \text{on } U^0. 
%\end{equation}
Now, for $i=1,\ldots, m-1$ and $h \in \mathbb R$, $h \neq 0$, we denote the operator of difference quotient in direction $e_i$ by $\partial^h_i$, i.\,e.
\[(\partial^h_i g)(y) = \frac{g(y + h e_i) - g(y)}{h}\] 
for any function $g$ on $U^+$ and $y \in U^+$ such that $\dist(y, \partial U^+ \setminus U^0) < h$. We will use the following version of the integration by parts formula for the operator $\partial^h_i$,
\begin{equation}\label{pr5}
\int_{U^+} f \partial^h_i g  = \int_{U^+} \partial^{-h}_i f g ,
\end{equation}
    which is valid whenever the support of $f$ or $g$ is at a distance at least $h$ from $\partial U^+ \setminus U^0$.

We take $\varphi \in C^1_c(U, [0,1])$ such that $\varphi = 1$ on $U_r$, $\varphi = 0$ on $U\setminus U_{2r}$, and $h \neq 0$, $|h|<\tfrac{1}{2}\dist(U_{2r}, \partial U_{3r})$. We note that $ \partial^{-h}_i(\varphi^2  \partial^{h}_i\widetilde u)$, $i = 1, \ldots, m-1$ are legitimate test functions for \eqref{pr4} (the summation convention is suppressed here and in the following calculations). If we stick them in (\ref{pr4}) and use (\ref{pr5}), we shall see that the r.\,h.\,s.\; of (\ref{pr4}) takes the following form, 
%using (\ref{ELeqAtilde}, \ref{ELbcAtilde}), 
\begin{equation}\label{diffquoteq1}
 \int_{U^+} |\partial^h_i \widetilde \bu|^2 \,\varphi^2 - \int_{U^+}  \widetilde \bff \partial^{-h}_i ( \varphi^2\partial^h_i \widetilde \bu) =:I_1 - I_2 .
\end{equation}
At the same time, (\ref{pr5}) applied to the l.\,h.\,s.\;of (\ref{pr4}) yields
%\begin{multline} \label{diffquoteq} I_1 - I_2 = \int_{U^+} |\partial^h_i \widetilde \bu|^2 \,\varphi^2 - \int_{U^+} \partial^h_i \widetilde \bff \cdot \partial^h_i \widetilde \bu\, \varphi^2 =  \lambda \int_{U^+} \div \partial^h_i (Q^T D \Phi(Q\, \nabla \widetilde\bu)) \cdot \partial^h_i \widetilde \bu \,\varphi^2 \\ 
%= - \lambda \int_{U^+}  \varphi \nabla \varphi \cdot \partial^h_i (Q^T D \Phi(Q\, \nabla \widetilde\bu)) \cdot \partial^h_i \widetilde \bu -\lambda \int_{U^+}  \partial^h_i (Q^T D \Phi(Q\, \nabla \widetilde\bu)) \cddot \partial^h_i \nabla \widetilde \bu \, \varphi^2  = -\lambda I_3 - \lambda I_4.  
%\end{multline} 
\begin{equation}\label{diffquoteq2}
- \lambda \int_{U^+}  \varphi \nabla \varphi \cdot \partial^h_i (Q^T D \Phi(Q\, \nabla \widetilde\bu)) \cdot \partial^h_i \widetilde \bu -\lambda \int_{U^+}  \partial^h_i (Q^T D \Phi(Q\, \nabla \widetilde\bu)) \cddot \partial^h_i \nabla \widetilde \bu \, \varphi^2  =:
-\lambda I_3 - \lambda I_4.
\end{equation}
Since $\|\partial^{-h}_i ( \varphi^2\partial^h_i \widetilde \bu)\|_{L^2(U^+,\mathbb{R}^n)}\le \|( \varphi^2\partial^h_i \widetilde \bu)_{y_i}\|_{L^2(U^+,\mathbb{R}^n)}$,  we have 
\begin{equation*} 
|I_2| \leq % = -  \int_{U^+} \widetilde \bff(\cdot + h \be_i) \cdot \partial^h_i(\varphi^2 \partial^h_i \widetilde \bu) \leq 
\| \bff\|_{L^2(U^+, \Rn)}\left\|(\varphi^2 \partial^h_i \widetilde \bu)_{y_i}\right\|_{L^2(U^+, \Rn)} \leq \| \bff\|_{L^2(U^+, \Rn)}\left\|\nabla(\varphi^2 \partial^h_i \widetilde \bu)\right\|_{L^2(U^+, \Rnm)}.
%\left\|\partial^h_i(\varphi^2 \partial^h_i \widetilde \bu)\right\|_{L^2(U^+, \Rn)}  
\end{equation*} 
Moreover, 
\begin{multline*} 
%\left\|\partial^h_i(\varphi^2 \partial^h_i \widetilde \bu)\right\|_{L^2(U^+, \Rn)} \leq \left\|(\varphi^2 \partial^h_i \widetilde \bu)_{x_i}\right\|_{L^2(U^+, \Rn)} \leq 
\left\|\nabla(\varphi^2 \partial^h_i \widetilde \bu)\right\|_{L^2(U^+, \Rnm)}  \leq \left\|\varphi^2 \partial^h_i \nabla \widetilde \bu\right\|_{L^2(U^+, \Rnm)} 
+ 2 \left\| \varphi \nabla \varphi \otimes \partial^h_i \widetilde \bu \right\|_{L^2(U^+, \Rnm)} \\ \leq \left\|\varphi \partial^h_i \nabla \widetilde \bu\right\|_{L^2(U^+, \Rnm)} + 2 \left\| \nabla\varphi\right\|_{L^\infty(U^+, \Rm)}\left\| \nabla \widetilde \bu\right\|_{L^2(U^+, \Rnm)}.
\end{multline*} 
Hence, for every $\eps >0$ there exists $C_3(\eps)>0$ such that 
\begin{equation} \label{I2est} 
I_2 \leq \eps \left\|\varphi \partial^h_i \nabla \widetilde \bu\right\|_{L^2(U^+, \Rnm)}^2  + \left\| \nabla\varphi\right\|_{L^\infty(U^+, \Rm)}^2 \left\| \nabla \widetilde \bu\right\|_{L^2(U^+, \Rnm)}^2 +  C_3(\eps)\| \bff\|_{L^2(U^+, \Rn)}^2. 
\end{equation} 
Next, we estimate $I_3$ and $I_4$, for this purpose we  rewrite
\begin{multline}\label{pr6}
h \partial^h_i D \Phi (Q \nabla \widetilde \bu) \\
= D \Phi (Q(\cdot+ h\be_i) \nabla \widetilde \bu(\cdot+ h\be_i)) - D \Phi (Q \nabla \widetilde \bu(\cdot+ h\be_i))
+ D \Phi (Q \nabla \widetilde \bu(\cdot+ he_i)) - D \Phi (Q \nabla \widetilde \bu).
\end{multline}
With (\ref{pr6}) in mind, we estimate
\begin{multline*}
    \left| \partial^h_i (Q^T D \Phi (Q \nabla \widetilde \bu))\right| \leq |\partial^h_i Q^T| |D \Phi (Q \nabla \widetilde \bu)| + |Q^T( \cdot + h \be_i)| |\partial^h_i D \Phi(Q \nabla \widetilde \bu)| \\ \leq C_Q \mu (1 + |Q| |\nabla \widetilde \bu|) + C_Q \mu |\partial^h_i (Q \nabla \widetilde \bu)| \leq C_Q \mu(1 + C_Q |\nabla \widetilde \bu| + C_Q |\partial^h_i \nabla \widetilde \bu|) .
\end{multline*}
Thus,
\begin{multline} \label{I3est} 
|I_3| \leq \eps \left\|\varphi \partial^h_i \nabla \widetilde \bu\right\|_{L^2(U^+, \Rnm)}^2 \\+ C_4(\eps, \mu, C_Q)\left(|U^+| + \left(1 + \left\| \nabla\varphi\right\|_{L^\infty(U^+, \Rm)}^2\right)\left\| \nabla \widetilde \bu\right\|_{L^2(U^+, \Rnm)}^2\right). 
\end{multline} 
Estimating $I_4$ requires more care. Using the Leibniz rule, we obtain 
\begin{equation} 
I_4 = \int_{U^+} \varphi^2 (\partial^h_i Q^T) D \Phi(Q\, \nabla \widetilde\bu) \cddot  \nabla \partial^h_i \widetilde \bu 
+\int_{U^+} \varphi^2 \partial^h_i D \Phi(Q\, \nabla \widetilde\bu) \cddot Q \nabla \partial^h_i \widetilde \bu =: A_1 + A_2.
\end{equation}
%\begin{multline*} 
%\partial^h_i (Q^T D \Phi(Q\, \nabla \widetilde\bu)) \cddot \partial^h_i \nabla \widetilde \bu = \partial^h_i (D \Phi(Q\, \nabla \widetilde\bu)) \cddot \partial^h_i (Q \nabla \widetilde \bu) \\+ (\partial^h_i Q^T) D\Phi(Q \nabla \widetilde \bu) \cddot \partial^h_i \nabla \widetilde \bu - \partial^h_i(D\Phi(Q \nabla \widetilde \bu)) \cddot (\partial^h_i Q) \nabla \widetilde \bu = A_1 + A_2 + A_3.
%\end{multline*}
Using (\ref{unifconv}, \ref{DPhi0}, \ref{CQ}), we estimate 
\begin{equation} 
|A_1| \leq \epsilon \|\varphi\partial^h_i \nabla \widetilde \bu \|_{L^2(U^+, \Rnm)}^2
+ C_5(\epsilon,C_Q,\mu) \| \nabla \widetilde \bu \|_{L^2(U^+, \Rnm)}^2.
%\varphi^2 \mu |Q\, \nabla \partial^h_i \widetilde\bu|^2 
%\ge \frac{\mu}{C_Q}   \int_{U^+} \varphi^2 \mu | \nabla \partial^h_i \widetilde\bu|^2 
%\ge \frac{\mu}{C_Q} \|  \nabla \partial^h_i \widetilde\bu\|^2_{L^2(U^+, \Rmn)}.
\end{equation}
Recalling (\ref{pr6}), 
\begin{multline}
A_2 = \frac1h\int_{U^+}\varphi^2 (D \Phi(Q\, \nabla \widetilde\bu(\cdot+he_i)) 
- D \Phi(Q\, \nabla \widetilde\bu)))  \cddot Q(\partial^h_i  \nabla \widetilde \bu) 
\\+  \frac1h\int_{U^+}\varphi^2 (D \Phi(Q(\cdot+he_i)\, \nabla \widetilde\bu(\cdot+he_i)) 
- D \Phi(Q\, \nabla \widetilde\bu(\cdot+he_i))))  \cddot Q(\partial^h_i  \nabla \widetilde \bu) 
=: B_1 +B_2.
\end{multline}
%\[A_1 \geq \tfrac{1}{\mu} |\partial^h_i (Q \nabla \widetilde \bu)|^2 \geq \tfrac{1}{\mu} \left(\tfrac{1}{2}|Q( \cdot + h \be_i) \partial^h_i \nabla \widetilde \bu|^2 - |\partial^h_i Q|^2 |\nabla \widetilde \bu|^2\right) \geq \tfrac{1}{2 \mu C_Q} |\partial^h_i \nabla \widetilde \bu|^2 - \tfrac{C_Q^2}{\mu} |\nabla \widetilde \bu|^2,\]
%\[|A_2| \leq C_Q\, \mu\, (1 + C_Q |\nabla \widetilde \bu|)|\partial^h_i \nabla \widetilde \bu|, \] 
%\[|A_3| \leq \mu\, C_Q^2\, (|\nabla \widetilde \bu| + |\partial^h_i \nabla \widetilde \bu|) |\nabla \widetilde \bu|.\] 
We estimate $B_2$ similarly as $A_1$, 
\[|B_2| \leq
\epsilon \|\varphi\partial^h_i \nabla \widetilde \bu \|_{L^2(U^+, \Rnm)}^2
+ C_6(\epsilon,C_Q,\mu) \| \nabla \widetilde \bu \|_{L^2(U^+, \Rnm)}^2.\]
We deal differently with $B_1$. Using (\ref{unifconv}) and (\ref{CQ}) yields
\begin{equation}\label{pr7} B_1\geq \mu \|\varphi Q\partial^h_i \nabla \widetilde \bu \|_{L^2(U^+, \Rnm)}^2 \ge 
\frac{\mu}{C_Q} \|\varphi\partial^h_i \nabla \widetilde \bu \|_{L^2(U^+, \Rnm)}^2.
\end{equation} 
\begin{comment} 
Consequently, using Cauchy's inequality, 
\begin{equation} \label{I4est} 
\left\|\varphi \partial^h_i \nabla \widetilde \bu\right\|_{L^2(U^+, \Rmn)}^2 \leq C_4(\mu, C_Q)\left(|U^+| + I_4 + \|\nabla \widetilde \bu\|_{L^2(U^+, \Rmn)}^2 \right). 
\end{equation} 
\end{comment} 
Collecting (\ref{diffquoteq1}-\ref{pr7}) and choosing $\eps$ small enough depending on $\mu, C_Q, \lambda$ we obtain 
\begin{multline*}\left\|\partial^h_i \nabla \widetilde \bu\right\|_{L^2(U_r^+, \Rnm)}^2 \leq \left\|\varphi \partial^h_i \nabla \widetilde \bu\right\|_{L^2(U^+, \Rnm)}^2 \\ \leq C_7\left(\mu, C_Q, \lambda, r, \left\| \nabla\varphi\right\|_{L^\infty(U^+, \Rm)}, \left\|\nabla \widetilde \bu\right\|_{L^2(U^+, \Rnm)}\right),
\end{multline*} 
whence $\widetilde \bu_{y_i} \in W^{1,2}(U_r^+, \Rnm)$ for $i = 1, \ldots, m-1$. 

In order to establish the missing estimate on $ \widetilde \bu_{y_my_m}$ it is advantageous to write (\ref{pr3}) as a differential equation,
\begin{equation} \label{ELeqAtilde} 
\widetilde \bu - \widetilde\bff = \lambda \div(Q^T D \Phi(Q\, \nabla \widetilde\bu)) \quad \text{in } U^+, \end{equation} 
\begin{equation} \label{ELbcAtilde} 
Q^T D \Phi(Q\, \nabla \widetilde\bu)\cdot \be_m = 0 \quad \text{on } U^0. 
\end{equation}
Expanding the divergence in \eqref{ELeqAtilde}, we obtain for $\alpha =1, \ldots, n$
\begin{multline} \label{expdiv} 
\widetilde u^\alpha - \widetilde f^\alpha = \sum_{i, j =1}^m Q_{ji, y_i} (D \Phi)^\alpha_j(Q \nabla \widetilde \bu) + \sum_{i, j,k,l =1}^m \sum_{\beta =1}^n Q_{ji}\, (D^2 \Phi)^{\alpha \beta}_{jk}(Q \nabla \widetilde \bu)\, Q_{kl, y_i}\, \widetilde u^\beta_{y_l} \\+ \sum_{i, j,k,l =1}^m \sum_{\beta =1}^n Q_{ji}\, (D^2 \Phi)^{\alpha \beta}_{jk}(Q \nabla \widetilde \bu)\, Q_{kl}\, \widetilde u^\beta_{y_l y_i}. 
\end{multline} 
We recall (\ref{unifconv}) to see that 
\[\left(\sum_{j,k =1}^m Q_{jm}\, (D^2 \Phi)^{\alpha \beta}_{jk}(Q \nabla \widetilde \bu)\, Q_{km}\right)_{\alpha, \beta = 1}^n \geq \frac{\mu}{C_Q^2} I^n.\]
Since we have already shown that $\widetilde \bu_{y_l y_i} \in L^2(U_r^+, \Rnm)$ as long as it is not the case that $l=i=m$, it follows from \eqref{expdiv} that also $\widetilde \bu_{y_m y_m} \in L^2(U_r^+, \Rnm)$. Thus, we have shown that $\widetilde \bu \in W^{2,2}(U_r^+, \Rnm)$ and therefore $\bu \in W^{2,2}(B_r(x_0)\cap \Omega, \Rnm)$. By compactness of $\partial \Omega$, it follows that $\bu \in W^{2,2}(\Omega, \Rnm)$. 
\end{proof}  

\noindent \textbf{Acknowledgement.} The authors thank an anonymous referee for their comments, which helped us to improve the text.

 \bibliographystyle{asdfgh}
 \bibliography{bib.bib.bib}
\end{document}